
\documentclass{amsart}
\usepackage{amscd,amssymb,hyperref}

\newcommand{\parl}{{\rm(}}
\newcommand{\parr}{{\rm)}}

\renewcommand{\phi}{\varphi}
\renewcommand{\epsilon}{\varepsilon}
\renewcommand{\emptyset}{\varnothing}

\newcommand{\bE}{\mathbf E}
\newcommand{\bG}{\mathbf G}
\newcommand{\scG}{\widetilde{\bG}}
\newcommand{\bH}{\mathbf H}

\newcommand{\bP}{\mathbf P}
\newcommand{\bU}{\mathbf U}
\newcommand{\bZ}{\mathbf Z}

\newcommand{\cE}{\mathcal E}
\newcommand{\cF}{\mathcal F}
\newcommand{\cG}{\mathcal G}
\newcommand{\cH}{\mathcal H}

\newcommand{\cO}{\mathcal O}
\newcommand{\cP}{\mathcal P}
\newcommand{\cT}{\mathcal T}

\newcommand{\cZ}{\mathcal Z}

\renewcommand{\P}{\mathbb P}
\newcommand{\A}{\mathbb A}
\newcommand{\Z}{\mathbb Z}
\newcommand{\Q}{\mathbb Q}

\newcommand{\Aff}{\mathrm{Aff}}
\newcommand{\Et}{\mathrm{\acute Et}}
\DeclareMathOperator{\Id}{Id}

\DeclareMathOperator{\Out}{\mathbf{Out}}
\DeclareMathOperator{\Aut}{\mathbf{Aut}}
\DeclareMathOperator{\Orb}{Orb}
\DeclareMathOperator{\spec}{Spec}
\DeclareMathOperator{\Hom}{Hom}
\DeclareMathOperator{\GL}{GL}

\DeclareMathOperator{\Pic}{Pic}
\DeclareMathOperator{\Gr}{Gr}
\DeclareMathOperator{\Res}{Res}

\newcommand{\Gm}[1]{\mathop{\mathbb G_{m,#1}}}

\newcommand{\ad}{\mathrm{ad}}
\renewcommand{\sc}{\mathrm{sc}}
\newcommand{\spl}{\mathrm{spl}}

\theoremstyle{plain}
\newtheorem{theorem}{Theorem}

\newtheorem{proposition}{Proposition}[section]
\newtheorem{lemma}[proposition]{Lemma}
\newtheorem*{corollary*}{Corollary}

\theoremstyle{definition}
\newtheorem{definition}[proposition]{Definition}

\theoremstyle{remark}
\newtheorem{remark}[proposition]{Remark}
\newtheorem{remarks}[proposition]{Remarks}
\newtheorem*{remark*}{Remark}

\begin{document}

\title[On the Grothendieck--Serre Conjecture]{On the Grothendieck--Serre Conjecture about principal bundles and its generalizations}

\keywords{Reductive group schemes; Principal bundles; Grothendieck--Serre conjecture; Affine Grassmannians}

\begin{abstract}
Let $U$ be a regular connected affine semi-local scheme over a field $k$. Let~$\bG$ be a reductive group scheme over $U$. Assuming that $\bG$ has an appropriate parabolic subgroup scheme, we prove the following statement. Given an affine $k$-scheme $W$, a principal $\bG$-bundle over $W\times_kU$ is trivial if it is trivial over the generic fiber of the projection $W\times_kU\to U$.

We also simplify the proof of the Grothendieck--Serre conjecture: let $U$ be a regular connected affine semi-local scheme over a field $k$. Let~$\bG$ be a reductive group scheme over $U$. A principal $\bG$-bundle over $U$ is trivial if it is trivial over the generic point of $U$.

We generalize some other related results from the simple simply-connected case to the case of arbitrary reductive group schemes.
\end{abstract}

\author{Roman Fedorov}
\email{fedorov@pitt.edu}
\address{University of Pittsburgh, PA, USA}

\maketitle

\section{Introduction and main results}
The conjecture of Grothendieck and Serre on principal bundles asserts that if $\bG$ is a reductive group scheme over a regular affine semi-local scheme $U$ and $\cE$ is a rationally trivial principal $\bG$-bundle over $U$, then $\cE$ is trivial. We refer the reader to Section~\ref{sect:DefConv} for the precise definitions. The conjecture has been proved in the case, when $U$ is a scheme over a field $k$ (see~\cite{FedorovPanin},~\cite{PaninFiniteFieldsIzvestiya}).

One of the main goals of this paper is to generalize this result to families as we now explain. Let $U$ and $\bG$ be as before and denote the generic point of $U$ by $\Omega$. Let $W$ be an affine $k$-scheme. Then a principal $\bG$-bundle $\cE$ over $W\times_kU$ is trivial, provided its restriction to $W\times_k\Omega$ is trivial, $\bG$ satisfies some isotropy condition, and $U$ is regular over $k$.

We note that our result is~\cite[Thm.~1.1]{PaninStavrovaVavilov} and~\cite[Thm.~7.1]{PaninNiceTriples}, provided that our group scheme is isotropic, simple, and simply-connected, and $U$ is the spectrum of a semi-local ring of finitely many closed points on an irreducible smooth affine $k$-variety.

In this paper, we will also give a streamlined and simplified proof of the conjecture of Grothendieck and Serre.

\subsection{Fully reducible reductive group schemes} We start with formulating precisely the isotropy condition mentioned above. Let $\bG$ be a reductive group scheme over a connected scheme $U$. Let $\bZ$ be the center of $\bG$. The adjoint group scheme $\bG^{\ad}:=\bG/\bZ$ is semisimple (see~\cite[Exp.~XXII, Prop.~4.3.4(i), Prop.~4.3.5(ii), Def.~4.3.6]{SGA3-3}). By~\cite[Exp.~XXIV, Prop.~5.10]{SGA3-3} there is a sequence $U_1,\ldots,U_r$ of finite \'etale connected $U$-schemes such that
\begin{equation}\label{eq:prod}
    \bG^{\ad}\simeq\prod_{i=1}^r\bG^i,
\end{equation}
where $\bG^i$ is the Weil restriction of a simple $U_i$-group scheme. Note that the group schemes $\bG^i$ are uniquely defined by $\bG$ up to isomorphism.

\begin{definition}
We say that a reductive $U$-group scheme $\bG$ is \emph{strongly locally isotropic\/ if each factor $\bG^i$ of $\bG^{\ad}$ is isotropic Zariski locally over $U$.}
\end{definition}

\begin{remarks}
(i) If $\bG$ is a simple group scheme over $U$ (or more generally, is the Weil restriction of a simple group scheme via a finite \'etale morphism $U'\to U$ with connected~$U'$ and $U$), then it is strongly locally isotropic if and only if Zariski locally over $U$ it contains a proper parabolic subgroup scheme; see~\cite[Exp.~XXVI, Cor.~6.14]{SGA3-3}.

(ii) It follows form the previous remark that if a reductive groups scheme is strongly locally isotropic, then it is locally isotropic.

(iii) Equivalently, one can show that a group scheme $\bG$ is strongly locally isotropic if and only if Zariski locally over $U$ it contains a parabolic subgroup scheme whose image in any non-abelian quotient of $\bG$ is a proper subgroup scheme of the quotient.
\end{remarks}

\subsection{The Grothendieck--Serre conjecture for families}
Here is our first main result.

\begin{theorem}\label{th:main}
Let $U$ be a regular connected affine semi-local scheme over a~field $k$. Denote by $\Omega$ the generic point of $U$. Let $\bG$ be a strongly locally isotropic reductive group scheme over~$U$. Let $W$ be an affine $k$-scheme. Let $\cE$ be a principal $\bG$-bundle over $W\times_k U$. If the restriction of $\cE$ to $W\times_k\Omega$ is trivial, then $\cE$ is trivial.
\end{theorem}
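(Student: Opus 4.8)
The plan is to deduce the family statement from the Section Theorem (Theorem~\ref{th:FP}) together with an $\A^1$-homotopy-invariance argument in which full reducibility plays the decisive role. I begin with two standard reductions. Since $W$ is an arbitrary affine $k$-scheme, I write it as a cofiltered limit of affine $k$-schemes of finite type over $k$; the bundle $\cE$ and a trivialization of its restriction to $W\times_k\Omega$ are defined at some finite stage, and the locus where the restriction is trivial is open, so I may assume $W$ is of finite type over $k$. Next, because $U$ is geometrically regular over $k$, Popescu's theorem presents $U$ as a cofiltered limit of smooth affine $k$-schemes; a further limit argument reduces us to the case in which $U$ is the semi-local ring at finitely many closed points of a smooth affine $k$-variety $X$. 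This is exactly where geometric regularity, as opposed to mere regularity, is indispensable: it is what produces smoothness over $k$ in the limit, so that the geometric-presentation machinery underlying Theorem~\ref{th:FP} applies.

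Next I invoke the Section Theorem. Applying Theorem~\ref{th:FP} to $\bG$ and $\cE$ over $U$, I expect to obtain a presentation of a neighborhood of the marked points as an essentially smooth relative curve $\cX\to S$ over an affine semi-local base $S$, in which $\cE$ becomes trivial away from a closed subscheme $\cD\subset\cX$ that is finite over $S$ and disjoint from the distinguished section, the bundle moreover being controlled along a section ``at infinity''. Because every construction in the Section Theorem takes place over $U$, taking the product with $W$ is harmless: the same presentation survives base change along $W\times_k(-)$, and the problem becomes the triviality of a $\bG$-bundle on $\A^1_{W\times_k S}$ (or on its projective completion $\P^1_{W\times_k S}$) that is trivial on the complement of a finite divisor and along the section at infinity.

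The endgame is a patching-and-contraction argument. On $\A^1\setminus\cD$ the bundle is trivial by construction, and over the formal neighborhood of $\cD$ it admits a trivialization as well, so the bundle is recovered from a gluing datum valued in the associated loop group along the punctured formal neighborhood. To trivialize it along the zero section I contract $\A^1$ to its origin using the $\mathbb{G}_m$-action, that is, I apply $\A^1$-homotopy invariance for $\bG$-bundles over the base $W\times_k S$. Full reducibility enters here in an essential way: the decomposition $\bG^{ad}\simeq\prod_i\bG^i$ into Weil restrictions of simple group schemes, together with a proper parabolic $\bP^i\ne\bG^i$ in each factor, furnishes the unipotent directions that make the contraction possible and reduces the homotopy-invariance input to the isotropic simple case. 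One concludes that the bundle is extended from $W\times_k S$, whence, using the triviality along the section, it is itself trivial, and unwinding the reductions yields the theorem.

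I expect the last step to be the main obstacle. In the non-family case one may first reduce $\bG$ to be simple and simply-connected by the purity theorems and then appeal to known homotopy-invariance results in that case; here one must instead establish the contraction directly for an arbitrary fully reducible $\bG$ and, crucially, uniformly in the family parameter $W$. The \emph{fully reducible} hypothesis is tailored precisely to this difficulty: it guarantees that every simple factor is isotropic, so that the required unipotent motions exist globally, allowing the argument to proceed without any appeal to purity and without first passing to the simple simply-connected case.
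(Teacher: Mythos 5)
Your reductions (passing to $W$ of finite type, Popescu's theorem to place $U$ as a semi-local scheme on a smooth affine $k$-variety, and constructing the geometric presentation over $U$ first and only then multiplying by $W$) match the paper's Steps 1--5, and your reading of where geometric regularity is needed is correct. The genuine gap is at the decisive step. After base change, the glued bundle $\cG$ lives on $\A^1_{U'}$ with $U'=W\times_kU$, and $U'$ is \emph{not} semi-local; so Theorem~\ref{th:FP}, which is what you invoke, simply does not apply --- its hypotheses are semi-locality together with conditions on residue fields. The paper needs, and proves, a separate statement, Theorem~\ref{th:FP2}, valid over an affine Noetherian connected base, and it is exactly there (not in the reductions) that full reducibility and an embedding $\bG\hookrightarrow\GL_{n,U}$ (available because $U$ is normal, by \cite[Cor.~3.2]{ThomasonResolution}) are consumed. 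You also have the logic of Theorem~\ref{th:FP} inverted: it does not ``produce a presentation''; the relative curve with a finite divisor comes from \cite[Thm.~1.5]{PaninNiceTriples}, and the Section Theorem is what one applies afterwards to the glued bundle on $\A^1_{U'}$.

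Your substitute for the missing theorem --- ``$\A^1$-homotopy invariance for $\bG$-bundles over $W\times_kS$'' implemented by a $\mathbb{G}_m$-contraction --- is not an available input and is false in that generality: $W$ is an arbitrary affine $k$-scheme, so $W\times_kS$ may be singular or non-reduced, and homotopy invariance of torsors fails over non-regular bases (already for line bundles, i.e.\ for the torus case, where full reducibility is vacuous). What is true is only the section statement under the hypothesis that the bundle is trivial off a subscheme finite over the base, and proving that statement is precisely the content you still owe. In the paper it is done by: (a) Moser's local--global principle \cite[Korollar~3.5.2]{MoserGrSerre}, via the $\GL_n$-embedding, to reduce to a local base; (b) pulling back along $z\mapsto z^d$ together with Proposition~\ref{pr:liftsTOscGr}, so that the closed fibers become topologically trivial --- this is the step that handles non-simply-connected $\bG$ and is the paper's main novelty; and (c) Theorem~\ref{th:MainThm2}, i.e.\ an affine Grassmannian modification at $Y=U\times0$ lifted to the simply-connected cover, with Lemma~\ref{lm:ExtendSect} (where the isotropy of the factors --- your ``unipotent motions'' --- genuinely enters, through elementary subgroups generated by opposite unipotent radicals) and the Horrocks-type Proposition~\ref{pr:Horrocks}. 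None of this machinery appears in your proposal, and you concede the point yourself by calling this step ``the main obstacle''; in addition, your claim that the bundle is trivial on a formal neighborhood of $\cD$ is unjustified in general (cf.\ Remark~\ref{rm:twistGr}). So the architecture is right, but the proof is missing exactly where the theorem goes beyond the known semi-local case.
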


This theorem will be proved in Section~\ref{sect:ProofOfMain}. It is known that the requirement that $\bG$ be strongly locally isotropic is necessary (see counterexamples in~\cite[Sect.~2.3]{FedorovExotic}). However, we conjecture that a similar statement is true even when $U$ is not a scheme over a field (that is, in the mixed characteristic case).

\subsection{A simplified proof of the Grothendieck--Serre Conjecture}
We will also present a simplified proof of the Grothendieck--Serre conjecture in Section~\ref{sect:ProofOfGrSerre}. Precisely, we will re-prove the following theorem.
\begin{theorem}[\cite{FedorovPanin},~\cite{PaninFiniteFieldsIzvestiya}]\label{th:GrSerre}
   Let $U$ be a regular connected affine semi-local scheme over a~field. Let $\Omega$ be the generic point of $U$. Let $\bG$ be a reductive group scheme over~$U$. Let $\cE$ be a principal $\bG$-bundle over $U$. If the restriction of $\cE$ to $\Omega$ is trivial, then $\cE$ is trivial.
\end{theorem}
Theorem~\ref{th:GrSerre} is derived from Theorem~\ref{th:FP} (the ``Section Theorem'') below using the results of~\cite{PaninNiceTriples}. Theorem~\ref{th:FP} was only known before for simple simply-connected group schemes. Thus, to prove Theorem~\ref{th:GrSerre}, one had first to reduce to the simple simply-connected case, using the so-called purity theorems~\cite{PaninPurity2010,PaninPurity17}. In this paper, we will show that this Section Theorem holds for all reductive group schemes, thus eliminating the difficult reduction to the simple simply-connected case. We will outline the strategy of the proof of Theorem~\ref{th:FP} after its formulation in Section~\ref{sect:SectTh}.

In the case, when $\bG$ is a torus, the Grothendieck--Serre conjecture was settled by J.-L.~Colliot-Th\'{e}l\`{e}ne and J.-J.~Sansuc in~\cite{ColliotTheleneSansuc}. It seems that our proof is new even in this case.

\subsection{An application: principal bundles over affine spaces}
The following theorem is a generalization of~\cite[Cor.~1.7]{PaninStavrovaVavilov}.
\begin{theorem}
Let $U$ be a regular connected affine scheme over $\Q$ and let $\bG$ be a~strongly locally isotropic reductive group scheme over $U$. Let $n$ be a non-negative integer, and let~$\cE$ be a~principal $\bG$-bundle over the affine space $\A_U^n$ whose restriction to the origin $U\times 0\subset\A_U^n$ is trivial. Then $\cE$ is trivial.
\end{theorem}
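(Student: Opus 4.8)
The plan is to deduce the theorem from Theorem~\ref{th:main} by showing that $\cE$ is \emph{extended} from $U$, that is, isomorphic to $p^*\cE_0$ for some principal $\bG$-bundle $\cE_0$ over $U$, where $p\colon\A^n_U\to U$ is the projection. Let $s\colon U\to\A^n_U$ be the zero section, identifying $U$ with $U\times 0$, so that $p\circ s=\Id_U$. Once extendedness is known, pulling back along $s$ gives $\cE_0\cong s^*p^*\cE_0\cong s^*\cE$, and $s^*\cE$ is trivial by hypothesis; hence $\cE_0$ is trivial and therefore so is $\cE\cong p^*\cE_0$. Since $U$ is connected, regular and affine over the perfect field $\Q$, it is integral and geometrically regular over $\Q$; I write $\Omega$ for its generic point and $K$ for its function field $\kappa(\Omega)$.

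First I would treat the generic fibre of $p$. Restricting $\cE$ to $\A^n_\Omega=\A^n_K$ yields a principal $\bG_\Omega$-bundle that is trivial over the origin $0\in\A^n_K$, being the restriction of the trivial bundle on $U\times 0$. As $\bG_\Omega$ is again fully reducible, each of its simple factors is isotropic over the characteristic-zero field $K$, and homotopy invariance of principal bundles on affine spaces over a field (the ``field case'', which holds for isotropic reductive groups over a field) shows that $\cE|_{\A^n_\Omega}$ is trivial.

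Next I would localise the base. For each closed point $\fm$ of $U$ the local ring $U_\fm$ is connected, semi-local and geometrically regular over $\Q$, shares the generic point $\Omega$ with $U$, and $\bG$ restricts to a fully reducible group scheme over it. Taking $W=\A^n_\Q$, so that $W\times_\Q U_\fm=\A^n_{U_\fm}$ and $W\times_\Q\Omega=\A^n_\Omega$, Theorem~\ref{th:main} combined with the triviality of $\cE|_{\A^n_\Omega}$ just established shows that $\cE|_{\A^n_{U_\fm}}$ is trivial, and in particular extended from $U_\fm$. A Quillen-type patching theorem for torsors under the smooth affine group scheme $\bG$ then promotes these local extensions to a global one, giving that $\cE$ is extended from $U$, as required.

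The hard part will be the two inputs that are not supplied by Theorem~\ref{th:main} itself. The first is the field case in the generality of fully reducible (rather than simple simply-connected) groups, which may require reducing to the simply-connected situation through the decomposition~\eqref{eq:prod} and a central isogeny. The second is ensuring that Quillen patching is available for $\bG$-torsors over $\Q$; this is the technical engine that bridges the gap between Theorem~\ref{th:main}, which requires a semi-local base, and the arbitrary regular affine $U$ of the statement, by passing from triviality over all the localisations $U_\fm$ to extendedness over $U$. I expect this patching step to be the main obstacle.
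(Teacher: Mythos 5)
Your overall architecture (localize, apply Theorem~\ref{th:main}, then patch) is coherent, but the two inputs you defer are genuine gaps, and the first one is essentially circular. The ``field case'' you invoke --- triviality of a torsor over $\A^n_K$ under a fully reducible reductive group, given triviality at the origin --- is not an available result in that generality: what can actually be cited (Raghunathan's theorems, \cite[Cor.~1.7]{PaninStavrovaVavilov}) concerns simple simply-connected isotropic groups, not fully reducible reductive groups with a possibly nontrivial radical and fundamental group. Supplying it would require a d\'evissage through the isogeny relating $\bG$ to $\prod_i\scG^i$ and its radical, with control of fppf cohomology of groups of multiplicative type on $\A^n_K$ --- exactly the sort of reduction this paper is structured to avoid. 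Worse, the field case is literally the instance $U=\spec K$ of the theorem you are proving, and Theorem~\ref{th:main} says nothing about it: when $U$ is a point, the generic fibre $W\times_k\Omega$ equals $W\times_kU$, so the hypothesis of Theorem~\ref{th:main} coincides with its conclusion. So your plan takes a special case of the statement as an input. The second deferred input is also not what the available tool provides: \cite[Korollar~3.5.2]{MoserGrSerre} (applicable since $\bG\hookrightarrow\GL_N$ by \cite[Cor.~3.2]{ThomasonResolution}) is used in this paper as a \emph{one-variable} local-global principle, over $\A^1$ of a base ring, whereas you need an $n$-variable Quillen patching theorem for $\bG$-torsors, which would have to be formulated and proved separately.

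The paper's proof avoids both problems with the one idea your proposal lacks: induction on $n$. Write $\A^n_U=\A^1_H$ with $H=\A^{n-1}_U\times 0$. The induction hypothesis gives that $\cE|_H$ is trivial; then the Raghunathan--Ramanathan theorem applied over $\A^1_\Omega$, where $\Omega$ is the generic point of $H$, gives that $\cE|_{\A^1_\Omega}$ is trivial --- and in one variable this holds in characteristic zero for \emph{all} reductive groups, with no isotropy hypothesis, so no field case of the theorem itself is ever needed. Next, Theorem~\ref{th:main} is applied with semi-local base $\spec\cO_{H,\xi}$ (for every point $\xi\in H$) and with the affine scheme $W=\A^1_\Q$, giving triviality of $\cE$ over $\A^1_{\cO_{H,\xi}}$; finally, Moser's one-variable patching, applied over the base $H$ rather than over $U$, yields triviality of $\cE$ over $\A^1_H=\A^n_U$. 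Note that if you tried to fill your field-case gap honestly, the natural argument would be exactly this induction run with $U=\spec K$; so the inductive structure is not a stylistic choice but the missing ingredient that makes all the required inputs one-variable statements that genuinely exist.
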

\begin{proof}
The proof is by induction on $n$. The case $n=0$ is obvious. Assume that the theorem is proved for $n-1$. Let $\cE$ be a principal $\bG$-bundle over $\A_U^n$. Write $\A_U^n=\A_U^{n-1}\times_U\A_U^1$. Let $H$ be the zero section $\A_U^{n-1}\times 0$ so that we identify $\A^n_U=\A_H^1$. Note that $H$ is integral. Let $\Omega$ be the generic point of $H$. We have a commutative diagram
\[
\begin{CD}
\Omega @>>>\A^1_\Omega \\
@VVV @VVV\\
H   @>>> \A^1_H=\A_U^n,
\end{CD}
\]
where the horizontal arrows are embeddings of the zero sections. By induction hypothesis the restriction of $\cE$ to $H$ is trivial, so its restriction to $\Omega$ is trivial as well. Since the restriction of $\cE$ to $\Omega$ is trivial, its restriction to $\A^1_\Omega$ is also trivial by Raghunathan--Ramanathan Theorem (see~\cite{RagunathanRamanthan,GilleTorseurs}, we are using that $U$ has characteristic zero).

Next, let $\xi$ be any point of $H$ and let $W$ be the spectrum of $\cO_{H,\xi}$. The restriction of $\cE$ to $\A^1_W$ via the obvious morphism is trivial by our Theorem~\ref{th:main}, since it is trivial over $\A^1_\Omega$. Further, $U$ is normal so, according to \cite[Cor.~3.2]{ThomasonResolution}, we can embed $\bG$ into $\GL_{n,U}$ for some $n$. Thus we can apply~\cite[Korollar~3.5.2]{MoserGrSerre} to see that the principal $\bG$-bundle $\cE$ is trivial over $\A^1_H=\A_U^n$.
\end{proof}

\subsection{Section Theorems}\label{sect:SectTh} The following Section Theorem will be used in the proof of Theorem~\ref{th:GrSerre} in Section~\ref{sect:ProofOfGrSerre}.

\begin{theorem}\label{th:FP}
Let $U$ be an affine semi-local scheme. Assume that either $U$ is a scheme over an infinite field, or $U$ is a scheme over a finite field and the residue fields of all the closed points of $U$ are finite. Let~$\bG$ be a reductive group scheme over $U$. Assume that $Z$ is a closed subscheme of $\A^1_U$ finite over~$U$. Let~$\cE$ be a principal $\bG$-bundle over $\A^1_U$ trivial over $\A^1_U-Z$. Then for every section $\Delta\colon U\to\A^1_U$ of the projection $\A^1_U\to U$ the principal $\bG$-bundle $\Delta^*\cE$ is trivial.
\end{theorem}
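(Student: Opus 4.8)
The plan is to prove that the class of $\Delta^*\cE$ in the pointed set $H^1(U,\bG)$ of isomorphism classes of principal $\bG$-bundles does not depend on the section $\Delta$. This suffices: the zero section $\Delta_0\colon U\to U\times 0$ factors through $\A^1_U-Z$, where $\cE$ is trivial by hypothesis, so $\Delta_0^*\cE$ is trivial, and hence so is every $\Delta^*\cE$. The whole problem is therefore to propagate triviality from the zero section to an arbitrary section, and the hypothesis that $\cE$ is trivial away from the finite-over-$U$ subscheme $Z$ is indispensable here, since without it the restriction to a section is not invariant under moving the section.

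First I would encode $\cE$ by gluing data supported on $Z$. Because $Z$ is finite over the affine semi-local scheme $U=\spec A$ and disjoint from the zero section, Beauville--Laszlo patching presents $\cE$ as the trivial bundle on $\A^1_U-Z$ glued to a bundle on the formal completion of $\A^1_U$ along $Z$ by a transition ``loop''; equivalently, $\cE$ is classified by a $U$-point of a relative affine Grassmannian $\Gr_{\bG}$ attached to the pair $(\A^1_U,Z)$. In this language the obstruction to triviality lives entirely at $Z$, and $\Delta^*\cE$ depends only on the way $\Delta$ meets $Z$ together with this loop datum.

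The geometric core is then a contraction. Extending $\cE$ across infinity to $\P^1_U$ (where it is trivial near the section at infinity), the scaling action of $\mathbb G_m$ on $\P^1$ fixes the sections $0$ and $\infty$ and flows any section toward $0$; combined with the ind-properness of $\Gr_{\bG}$ this lets one degenerate the loop datum attached to $\Delta$ to the one attached to $\Delta_0$, keeping the singularity locus disjoint from the zero section throughout (this is where $0\notin Z$ enters). Carrying this through identifies $\Delta^*\cE$ with $\Delta_0^*\cE$. The auxiliary steps that put $Z$ and $\Delta$ in good relative position need sufficiently many rational points, so they split according to the hypotheses on the ground field: over an infinite field one argues by general position, and over a finite field with finite residue fields one substitutes the standard avoidance and transfer tricks.

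I expect the main obstacle to be running this contraction for an \emph{arbitrary} reductive $\bG$, rather than reducing first---via the purity theorems---to the simple simply-connected case. For general $\bG$ the affine Grassmannian is disconnected, with components indexed by the algebraic fundamental group of $\bG$, and $\bG$ carries a central torus, so the deformation must be performed uniformly and compatibly with the decomposition of $\bG^{ad}$ into Weil restrictions of simple group schemes in \eqref{eq:prod}. The contributions of the radical and the center---precisely what the simply-connected reduction was designed to strip away---must now be tracked directly; over the semi-local base they are constrained (for instance $\Pic(U)=0$), but making the $\mathbb G_m$-contraction respect the full reductive structure all at once, instead of one simple simply-connected factor at a time, is the genuinely new work.
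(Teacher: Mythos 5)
Your reduction of the problem to comparing $\Delta^*\cE$ with $\Delta_0^*\cE$ (where $\Delta_0$ is the zero section) is fine, but the mechanism you propose for that comparison --- the $\Gm{U}$-contraction --- is circular. Concretely: pulling $\cE$ back along $U\times\A^1\to\A^1_U$, $(u,t)\mapsto t\Delta(u)$, produces a $\bG$-bundle $\cF$ over $\A^1_U$ (coordinate $t$) that is trivial away from a closed subscheme finite over $U$ and disjoint from the zero section (at least when $\Delta$ avoids the zero section; otherwise the singular locus of $\cF$ need not even be finite over $U$), with $\cF|_{U\times0}=\Delta_0^*\cE$ trivial and $\cF|_{U\times1}=\Delta^*\cE$. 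Deducing from these data that $\cF|_{U\times1}$ is trivial \emph{is} Theorem~\ref{th:FP} for the section $U\times1$; your contraction merely reproves the one-line reduction (an affine change of coordinates) with which the paper's proof begins, and leaves the actual theorem untouched. Ind-properness of $\Gr_\bG$ cannot close this circle: the family of loop data is already defined over all of $\A^1$, so there is no limit to extend by a valuative criterion, and the mere existence of a family over $\A^1$ joining $\Delta^*\cE$ to the trivial bundle does not make its fibers isomorphic --- that homotopy-invariance statement is exactly what is being proved, and it is false without the hypotheses of the theorem. What is needed at this point is a rigidity mechanism of a different kind, and the paper's is: modify the bundle at an auxiliary finite \'etale $Y\subset\A^1_U$, disjoint from $Z$ and from the section, over which the factors $\bG^i$ of \eqref{eq:prod} are isotropic, so as to make it trivial on every closed fiber of $\P^1_U\to U$ (Lemma~\ref{lm:ExtendSect}, resting on the elementary subgroup generated by opposite unipotent radicals), then apply the Horrocks-type Proposition~\ref{pr:Horrocks} and Lemma~\ref{lm:Z-bundles} for the center; this is Theorem~\ref{th:MainThm2}. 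This is also where the field hypotheses really enter: not to put $Z$ and $\Delta$ in general position, but to construct $Y$ with $\bG^{ad}_Y$ isotropic (resp.\ quasisplit) and $\Pic(\P^1_u-Y_u)=0$, via \cite[Prop.~4.1]{FedorovPanin} and \cite[Prop.~4.2]{PaninFiniteFields}.

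Second, you correctly single out non-simply-connected $\bG$ as the crux, but you then leave it as ``the genuinely new work,'' i.e.\ unproved; so even if the contraction step were repaired, the proposal would at best cover the simply-connected case, which was already known. The paper's device here is concrete and is what any proof along these lines must reproduce: pull the extended bundle $\tilde\cE$ back along the cover $\P^1_U\to\P^1_U$, $z\mapsto z^d$, where $d$ is the degree of the simply-connected central cover $\bG^{sc}\to\bG/\bT$ ($\bT$ the radical). This cover fixes the section $U\times1$, and by Proposition~\ref{pr:PullbackTopTriv} it makes the restrictions to closed fibers topologically trivial, after which Proposition~\ref{pr:liftsTOscGr} allows the modification data to be lifted to the simply-connected cover factor by factor along \eqref{eq:prod}; the radical and the center are then handled by Proposition~\ref{pr:Horrocks} and Lemma~\ref{lm:Z-bundles}. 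Nothing in your sketch plays the role of this step, and without it (or an equivalent device) the components of $\Gr_\bG$ and the central torus that you mention remain genuine obstructions rather than tracked quantities.
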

This is a generalization of~\cite[Thm.~2]{FedorovPanin} and of~\cite[Thm.~1.6]{PaninFiniteFieldsIzvestiya} from simple simply-connected to reductive group schemes. This theorem will be proved in Section~\ref{sect:ProofFP}.

For not necessarily semi-local $U$ we have a weaker statement, which will be used in Section~\ref{sect:ProofOfMain} to prove Theorem~\ref{th:main}.

\begin{theorem}\label{th:FP2}
Let $U$ be an affine Noetherian connected scheme over a field. Let~$\bG$ be a reductive group scheme over $U$ such that $\bG$ can be embedded into $\GL_{n,U}$ for some~$n$. Assume that $\bG$ is strongly locally isotropic. Assume that $Z\subset\A^1_U$ is a closed subscheme finite over $U$. Let $\cE$ be a principal $\bG$-bundle over $\A^1_U$ trivial over $\A^1_U-Z$. Then for every section $\Delta\colon U\to\A^1_U$ of the projection $\A^1_U\to U$ the principal $\bG$-bundle $\Delta^*\cE$ is trivial.
\end{theorem}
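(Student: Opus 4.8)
The plan is to build on the semi-local Section Theorem (Theorem~\ref{th:FP}), which I may assume, and to upgrade it from the semi-local case to a general affine base by a local-to-global argument in which full reducibility plays the decisive role. I would begin with two reductions. First, since $Z\subset\A^1_U-(U\times0)$, the zero section $0\colon U\to\A^1_U$ factors through the open subscheme $\A^1_U-Z$ over which $\cE$ is trivial; hence $0^*\cE$ is trivial, and it suffices to trivialize $\Delta^*\cE$ (equivalently, to produce an isomorphism $\Delta^*\cE\simeq 0^*\cE$). Second, because $\bG$ is smooth, a $\bG$-bundle is trivial if and only if its restriction to $U_{red}$ is trivial, so I may assume $U$ reduced.

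The semi-local input is then applied fibrewise. For each point $\xi\in U$ the local ring $\cO_{U,\xi}$ is semi-local, and the base changes $\bG_\xi,Z_\xi,\cE_\xi$ satisfy the hypotheses of Theorem~\ref{th:FP}: $Z_\xi$ is finite over $\cO_{U,\xi}$ and disjoint from the zero section, and $\cE_\xi$ is trivial away from $Z_\xi$. That theorem therefore shows that $\Delta^*\cE$ is trivial over $\spec\cO_{U,\xi}$. (Over a finite base field this step should be restricted to closed points $\xi$, whose residue fields are finite; that suffices below since $U$ is Jacobson.) Consequently $\Delta^*\cE$ is Zariski-locally trivial on $U$: there is an open cover on each member of which a trivialization exists, with transition data between overlapping trivializations given by sections of $\bG$.

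The remaining, and principal, difficulty is globalization: a Zariski-locally-trivial $\bG$-bundle over an affine base need not be globally trivial (already for $\bG=\Gm{U}$ when $\Pic U\ne0$), so one cannot simply patch. This is exactly where I would use full reducibility. Fix a parabolic subgroup scheme $\bP\subset\bG$ that is proper in every simple factor of $\bG^{ad}$, together with an opposite $\bP^-$ and their unipotent radicals $\bU,\bU^-$, which are smooth affine group schemes with affine-space fibres. I would show that the transition data above can be taken valued in the elementary subgroup sheaf generated by $\bU$ and $\bU^-$, and that these satisfy a Quillen-type local-global principle over $U$. Since $\bU$ and $\bU^-$ are vector group schemes, gluing the local trivializations into a single global one then reduces to a Horrocks-type descent together with the vanishing of the relevant cohomology with coefficients in these vector groups; the embedding $\bG\hookrightarrow\GL_{n,U}$ lets me phrase this patching in terms of vector bundles and matrices and invoke a Quillen--Suslin style argument.

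I expect this last step to be the main obstacle. Over a semi-local base it is vacuous, which is why Theorem~\ref{th:FP} already suffices there; over a general affine $U$ one must instead control the elementary subgroup globally, and it is precisely full reducibility---supplying enough unipotent, hence $\A^1$-contractible, one-parameter subgroups in every simple factor---that makes the globalization go through, thereby bypassing any reduction to the simple simply-connected case.
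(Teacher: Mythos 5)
There is a genuine gap, and it sits exactly where you predict: the globalization step. The problem is not that the step is hard but that, as you have set it up, it is aimed at a false statement. After applying Theorem~\ref{th:FP} over each local ring you retain only the information that $\Delta^*\cE$ is a Zariski-locally trivial $\bG$-bundle over $U$, having discarded the affine-line structure; but ``Zariski-locally trivial over affine $U$ implies trivial'' is false under all the hypotheses of Theorem~\ref{th:FP2}. Take $\bG=\Gm{U}$: it is fully reducible (the product~\eqref{eq:prod} is empty, so the condition is vacuous), it embeds into $\GL_{1,U}$, and a nontrivial element of $\Pic(U)$ is a Zariski-locally trivial, nontrivial $\bG$-bundle. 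Your proposed remedy --- transition data in the elementary sheaf generated by $\bU^{\pm}$ plus a Quillen--Suslin argument --- cannot close this gap even in principle: for a torus there are no proper parabolics and the elementary sheaf is trivial, and in general the unipotent radicals only see the derived group, never the central/toral directions where the obstruction ($\Pic$) lives. So no intermediate statement of the form ``locally trivial $\Rightarrow$ trivial over $U$'' can be the engine of the proof; one must keep the bundle over $\A^1_U$ in play.

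The paper runs the two moves in the opposite order, and this is essential. It extends $\cE$ to $\P^1_U$, pulls back by $z\mapsto z^d$ to make the closed fibers topologically trivial, and then applies the Quillen-type patching theorem (Moser, Korollar~3.5.2 --- this is where the $\GL_{n,U}$-embedding is used) to the bundle over $\P^1_U-(U\times0)\simeq\A^1_U$ itself, thereby reducing to the case of \emph{local} $U$ before any triviality is extracted. The local statement is then supplied not by Theorem~\ref{th:FP} but by Theorem~\ref{th:MainThm2} with $Y=U\times0$, where full reducibility enters (via~\cite[Exp.~XXVI, Cor.~6.14]{SGA3-3} it gives isotropy of each $(\bG^i)_Y$, and $\Pic(\A^1_u)=0$ handles the Picard conditions). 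Note that Theorem~\ref{th:FP} would be insufficient as the local input even with the correct patching order: Moser's theorem needs triviality of the bundle over the whole of $\A^1_{\cO_{U,u}}$, whereas Theorem~\ref{th:FP} only yields triviality of its restrictions to sections. Two smaller problems in your write-up: $U$ affine Noetherian over a field need not be Jacobson (any local domain of positive dimension), and over a finite base field the local rings $\cO_{U,\xi}$ need not have finite residue fields at their closed points (since $U$ is not assumed of finite type), so the hypotheses of Theorem~\ref{th:FP} can fail there; the paper's route through Theorem~\ref{th:MainThm2} avoids this dichotomy altogether.
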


This Section Theorem will be proved in Section~\ref{sect:ProofFP2}.

\begin{remark}
The condition that $\bG$ can be embedded into $\GL_{n,U}$ for some $n$ is satisfied in many cases: e.g.\ if $\bG$ is semisimple or if $U$ is normal, see~\cite[Cor.~3.2]{ThomasonResolution}.
\end{remark}

The idea of the proofs of the Section Theorems above is the following: first, we extend the principal $\bG$-bundle $\cE$ to a principal $\bG$-bundle $\hat\cE$ over $\P^1_U$. If $\bG$ is not simply-connected, then the usual proof goes through with some modifications, provided that the restrictions of~$\cE$ to the closed fibers of $\P^1_U\to U$ are in the neutral connected component of the stack of principal bundles. This can always be achieved by pulling back $\hat\cE$ via a cover $\P^1_U\to\P^1_U$ of a sufficiently divisible degree.

\subsection{Definitions, conventions, and notation}\label{sect:DefConv}
All rings in this paper are commutative and unital. A semi-local ring is a Noetherian ring having only finitely many maximal ideals. An \emph{affine semi-local scheme\/} is a scheme isomorphic to the spectrum of a semi-local ring.

A group scheme $\bG$ over a scheme $U$ is called \emph{reductive\/} if $\bG$ is affine and smooth as a~$U$-scheme and, moreover, the geometric fibers of $\bG$ are connected reductive algebraic groups (see~\cite[Exp.~XIX, Definition~2.7]{SGA3-3}). A smooth group scheme over a field $k$ is called \emph{a $k$-group}.

A $U$-scheme $\cE$ with a left action $act\colon\bG\times\cE\to\cE$ is called \emph{a principal $\bG$-bundle over $U$} if~$\cE$ is faithfully flat and quasi-compact over $U$ and the action is simply transitive, that is, the morphism $(act,p_2)\colon\bG\times_U\cE\to\cE\times_U\cE$ is an isomorphism (see~\cite[Section~6]{GrothendieckFGA}). A principal $\bG$-bundle $\cE$ over $U$ is \emph{trivial} if $\cE$ is isomorphic to $\bG$ as a~$U$-scheme with an action of $\bG$. This is well-known to be equivalent to the projection $\cE\to U$ having a section. We will use the term ``principal $\bG$-bundle over $T$'' to mean a principal $\bG_T$-bundle over $T$. We usually drop the adjective `principal'.

A subgroup scheme $\bP\subset\bG$ is \emph{parabolic\/} if $\bP$ is smooth over $U$ and for all geometric points $\spec k\to U$ the quotient $\bG_k/\bP_k$ is proper over $k$ (here $k$ is an algebraically closed field). This coincides with~\cite[Exp.~XXVI, Def.~1.1]{SGA3-3}.

\subsection{Acknowledgements} The first draft of this paper was written during the author's stay at the Universit\'e
Claude Bernard Lyon~1 for the quarter ``Algebraic groups and geometry of the Langlands program''. He wants to thank the organizers and especially Philippe Gille for his interest and stimulating discussions. The author is grateful to Dima Arinkin, Vladimir Chernousov, and Ivan Panin for constant interest in his work. The author is partially supported by the NSF DMS grant 1764391. A~part of the work was done during the author's stay at Max Planck Institute for Mathematics in Bonn. The author is thankful to the anonymous referee for reading the paper very carefully.

\section{Proofs of Theorem~\ref{th:FP} and of Theorem~\ref{th:FP2}}
We need some preliminaries.

\subsection{Topologically trivial principal bundles over $\P^1$}
Let $G$ be a semisimple group scheme over a field $k$. Let $\phi\colon G^{\,\sc}\to G$ be the simply-connected central cover. In other words, $G^{\,\sc}$ is simply-connected and $\phi$ is a central isogeny (in particular, $\phi$ is finite and flat).

\begin{definition}\label{def:TopTriv}
A Zariski locally trivial $G$-bundle $E$ over $\P^1_k$ is called \emph{topologically trivial} if it can be lifted to a Zariski locally trivial $G^{\,\sc}$-bundle. More precisely, this means that there is a Zariski locally trivial $G^{\,\sc}$-bundle $E^{\sc}$ over $\P^1_k$ such that $\phi_*E^{\sc}\simeq E$.
\end{definition}

\begin{remark}
If $k$ is the field of complex numbers, then a principal bundle over $\P^1_k$ is topologically trivial in the sense of Definition~\ref{def:TopTriv} if and only if it is topologically trivial in the usual sense, that is, it has a continuous section (cf.~\cite[Cor.~4.1.2]{SorgerLecturesBundles}), which justifies the name.
\end{remark}

We need the following proposition.
\begin{proposition}\label{pr:PullbackTopTriv}
For every Zariski locally trivial $G$-bundle $E$ over $\P^1_k$ and for every finite morphism $\psi\colon\P^1_k\to\P^1_k$ whose degree is divisible by the degree of $\phi$, the $G$-bundle $\psi^*E$ is topologically trivial.
\end{proposition}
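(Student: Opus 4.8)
The plan is to detect topological triviality by a single obstruction class living in a finite group that is killed by $\deg\phi$, and then to show that pullback along $\psi$ multiplies this class by $\deg\psi$. Write $\mu=\ker\phi$, a finite central commutative group scheme over $k$ whose order equals $\deg\phi$; by Deligne's theorem $\mu$ is killed by $\deg\phi$. The central isogeny $1\to\mu\to G^{sc}\xrightarrow{\phi}G\to1$ yields an exact sequence of pointed cohomology sets
\[
H^1(\P^1_k,G^{sc})\xrightarrow{\phi_*}H^1(\P^1_k,G)\xrightarrow{\partial}H^2(\P^1_k,\mu),
\]
so that $\partial(E)=0$ is equivalent to $E$ lifting to an \'etale $G^{sc}$-torsor. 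Topological triviality in the sense of Definition~\ref{def:TopTriv} additionally demands that the lift be Zariski locally trivial, a point I return to at the end. The heart of the matter is then the obstruction $\partial(E)\in H^2(\P^1_k,\mu)$ together with the functoriality $\partial(\psi^*E)=\psi^*\partial(E)$.

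Next I would analyze the action of $\psi^*$ on $H^2(\P^1_k,\mu)$ using the rational point $\infty\in\P^1_k(k)$. The structure map $p\colon\P^1_k\to\spec k$ has the section $\infty$, so $p^*$ is split injective with splitting $i_\infty^*$, giving a decomposition $H^2(\P^1_k,\mu)=p^*H^2(k,\mu)\oplus K$ with $K=\ker i_\infty^*$. The summand $K$ is the ``geometric'' part, receiving the classes coming from $\Pic(\P^1_k)$; since $\psi$ is a $k$-morphism we have $p\circ\psi=p$, so $\psi^*$ is the identity on $p^*H^2(k,\mu)$, while on $K$ it acts through its action on $\Pic(\P^1_k)=\Z$, namely by multiplication by $\deg\psi$ (because $\psi^*\cO(1)=\cO(\deg\psi)$). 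The Brauer contribution sits entirely in the $p^*$-summand, as $i_\infty^*\colon\mathrm{Br}(\P^1_k)\to\mathrm{Br}(k)$ is an isomorphism inverse to $p^*$; this is precisely why $\psi^*$ is \emph{not} multiplication by $\deg\psi$ on all of $H^2$, and why the rational point is used to isolate $K$.

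The crucial input from Zariski local triviality is that $\partial(E)$ lands in $K$: since $E$ is trivial on a Zariski neighborhood of $\infty$, its restriction $i_\infty^*E$ is a trivial $G$-bundle over $\spec k$, whence $i_\infty^*\partial(E)=\partial(i_\infty^*E)=0$. The same reasoning applied to the Zariski locally trivial bundle $\psi^*E$ shows $\psi^*\partial(E)=\partial(\psi^*E)\in K$. Combining this with the previous paragraph, $\psi^*\partial(E)=\deg\psi\cdot\partial(E)$ in $K$. Because $K$ is killed by $\deg\phi$ and $\deg\phi\mid\deg\psi$ by hypothesis, we conclude $\partial(\psi^*E)=\psi^*\partial(E)=0$, so $\psi^*E$ lifts to an \'etale $G^{sc}$-torsor.

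The main obstacle, and the point I would treat most carefully, is upgrading this \'etale lift to a Zariski locally trivial one. Here I would pass to the clutching description: $\psi^*E$ is itself Zariski locally trivial, so after reducing to a maximal torus (Harder's theorem) it is given by a transition cocycle $h\in G(\Gm{k})$ relative to the standard cover $\P^1_k=(\P^1_k\setminus\infty)\cup(\P^1_k\setminus0)$. The vanishing of $\partial(\psi^*E)$ is exactly the statement that $h$ lifts to $G^{sc}(\Gm{k})$, and any such lift is a transition cocycle for a Zariski locally trivial $G^{sc}$-bundle $E^{sc}$ with $\phi_*E^{sc}\simeq\psi^*E$, exhibiting $\psi^*E$ as topologically trivial. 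Matching the cocycle-lifting obstruction with the class $\partial(\psi^*E)$, and handling the non-split case of Harder's theorem over a general field, is where the genuine work lies; once the set-up is in place, the finite-order computation of the third paragraph makes the conclusion fall out.
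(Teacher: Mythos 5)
Your obstruction-theoretic setup in the first three paragraphs is workable in principle (modulo two points you gloss over: in characteristic $p$ the kernel $\mu$ may be infinitesimal, so the boundary map and all the cohomology must be taken in the fppf topology; and your identification of $K$ with a $\Pic$-generated part, together with the claim that $\psi^*$ acts on $K$ by $\deg\psi$, is argued only in the $\mu_n$/Kummer picture and needs a Leray or d\'evissage argument for a general multiplicative $\mu$). The proof, however, genuinely breaks at the final step, and the break is not a deferrable technicality: the vanishing of $\partial(\psi^*E)$ is \emph{not} equivalent to the transition cocycle $h\in G(\Gm{k})$ lifting to $G^{sc}(\Gm{k})$. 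The obstruction to lifting $h$ itself is a class $\delta(h)\in H^1_{\mathrm{fppf}}(\Gm{k},\mu)$, and since $\psi^*E$ is glued from trivial bundles on $\A^1_k$ and $\P^1_k-\{0\}$ along $h$, the class $\partial(\psi^*E)$ is the image of $\delta(h)$ under the Mayer--Vietoris boundary $H^1_{\mathrm{fppf}}(\Gm{k},\mu)\to H^2_{\mathrm{fppf}}(\P^1_k,\mu)$. That boundary map has a large kernel, namely the image of $H^1_{\mathrm{fppf}}(\A^1_k,\mu)\oplus H^1_{\mathrm{fppf}}(\P^1_k-\{0\},\mu)$; in particular it contains all constant classes coming from $H^1_{\mathrm{fppf}}(k,\mu)$, which for $\mu=\mu_2$ and $k=\Q$ is the infinite group $\Q^\times/(\Q^\times)^2$. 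So $\partial(\psi^*E)=0$ yields only that $\psi^*E$ admits \emph{some} fppf lift to a $G^{sc}$-torsor, obtained by gluing $\mu$-twists of trivial $G^{sc}$-bundles on the two charts; such a lift need not be rationally trivial, hence need not be Zariski locally trivial, whereas Definition~\ref{def:TopTriv} demands exactly a Zariski locally trivial lift. The $\mu$-twist needed to repair the generic fiber of such a lift is a class in $H^1(k(t),\mu)$ that in general does not extend to $\P^1_k$. Bridging this Zariski-versus-\'etale gap is the actual content of the proposition, and it is missing from your argument.

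The irony is that the ingredient you invoke in your last paragraph to set up the cocycle — the Grothendieck--Harder--Gille classification (\cite[Thm.~3.8(b)]{GilleTorseurs}), which presents $E\simeq\lambda_*\cO(1)^\times$ for a co-character $\lambda$ of a maximal split torus $T\subset G$ — already finishes the proof without any $H^2$: this is what the paper does. One has $\psi^*E\simeq\lambda_*\cO(n)^\times\simeq(n\lambda)_*\cO(1)^\times$ with $n=\deg\psi$, and the paper's lemma shows that the neutral connected component $T^{sc}$ of $(T\times_GG^{sc})_{red}$ is a \emph{split} torus with $\phi|_{T^{sc}}:T^{sc}\to T$ an isogeny of degree $d'$ dividing $d=\deg\phi$; hence $X_*(T)/X_*(T^{sc})$ is killed by $d'$, so $d\mid n$ forces $n\lambda\in X_*(T^{sc})$, and then $(n\lambda)_*\cO(1)^\times$ tautologically lifts to the Zariski locally trivial $G^{sc}$-bundle defined by the same co-character. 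This elementary lattice computation is precisely the ``matching of the cocycle-lifting obstruction'' that you postpone; with it in hand, your cohomological detour is superfluous, and without it, your proof is incomplete.
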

Before giving the proof of the proposition we recall the description of Zariski locally trivial $G$-bundles over $\P^1_k$. Let $T\subset G$ be a maximal split torus of $G$. Let~$E$ be a Zariski locally trivial $G$-bundle over $\P^1_k$. Then by~\cite[Thm.~3.8(b)]{GilleTorseurs}, there is a co-character $\lambda\colon\Gm{k}\to T$ such that $E\simeq\lambda_*\cO(1)^\times$. Here $\cO(1)$ is the hyperplane line bundle over $\P^1_k$; the $\Gm{k}$-bundle $\cO(1)^\times$ is the complement of the zero section in $\cO(1)$. We are slightly abusing the notation, denoting the composition $\Gm{k}\xrightarrow\lambda T\hookrightarrow G$ by $\lambda$ as well.

\begin{proof}[Proof of Proposition~\ref{pr:PullbackTopTriv}]
Put $d:=\deg\phi$. Let $T^{\,\sc}$ be a maximal split torus of $G^{\,\sc}$. By~\cite[Thm.~2.20(ii)]{Borel-Tits2}, $T:=\phi(T^{\,\sc})$ is a maximal split torus of $G$. The $k$-group scheme $T\times_{G}G^{\,\sc}$ is of multiplicative type by~\cite[Exp.~XVII, Prop.~7.1.1(b)]{SGA3-2} and the isogeny $T\times_{G}G^{\,\sc}\to T$ also has degree $d$. It is clear that $T^{\,\sc}$ is the toral part of $T\times_{G}G^{\,\sc}$. It is also clear that $\phi|_{T^{\,\sc}}\colon T^{\,\sc}\to T$ is an isogeny whose degree divides $d$ (indeed, we can check it over an algebraic closure of $k$ in which case we may assume that $T\times_GG^{\,\sc}$ is diagonalizable).

Denote the degree of the isogeny $\phi|_{T^{\,\sc}}\colon T^{\,\sc}\to T$ by $d'$. It is also the index of the co-character lattice $X_*(T^{\,\sc})$ in $X_*(T)$. Let $E$ be a Zariski locally trivial $G$-bundle over $\P^1_k$. As we have already mentioned, by~\cite[Thm.~3.8(b)]{GilleTorseurs} there is a co-character $\lambda\colon\Gm{k}\to T$ such that $E\simeq\lambda_*\cO(1)^\times$. Let $\psi\colon\P^1_k\to\P^1_k$ be a finite morphism of degree $n$. Then
\[
    \psi^*E\simeq\lambda_*\cO(n)^\times\simeq(n\lambda)_*\cO(1)^\times,
\]
where $\cO(n)$ is the $n$-th tensor power of $\cO(1)$. If $d$ divides $n$, then $d'$ divides $n$ as well, so $n\lambda$ is a co-character of $X_*(T^{\,\sc})$ and it is clear that $\psi^*E$ can be lifted to a~$G^{\,\sc}$-bundle. The Proposition~\ref{pr:PullbackTopTriv} is proved.
\end{proof}

It is clear from the proof, that it is enough to require that the degree of $\psi$ is divisible by the exponent of the kernel of $\phi$.

\subsection{Recollection on affine Grassmannians}\label{sect:AffGr} We will use affine Grassmannians of group schemes defined in~\cite{FedorovExotic} in the proof of Theorem~\ref{th:MainThm2} below. We only consider the affine Grassmannians for semisimple group schemes. The results below should hold in bigger generality, for example, if the group scheme is reductive and can be embedded into the general linear group scheme. Since we are not aware of a reference, we will restrict ourselves to the semisimple case.

For an affine scheme $T=\spec S$, put $D_T:=\spec S[[t]]$ and $\dot D_T:=\spec S((t))$, where $S((t)):=S[[t]](t^{-1})$.

Recall the definition of affine Grassmannians from~\cite[Sect.~5.1]{FedorovExotic}. Consider a~connected affine scheme $U=\spec R$; let $\Aff/U$ be the (big) \'etale site of affine schemes over $U$ and $\Et/U$ be the (big) \'etale site of schemes over $U$. Recall that a~\emph{$U$-space} is a sheaf of sets on $\Et/U$. We can equivalently view it as a sheaf on $\Aff/U$ (see~\cite[Exp.~VII, Prop.~3.1]{SGA4-2}). Let $\bG$ be a smooth affine $U$-group scheme. The \emph{affine Grassmannian\/} $\Gr_\bG$ is defined as the sheafification of the presheaf, sending an affine $U$-scheme $T$ to the set $\bG(\dot D_T)/\bG(D_T)$. (The morphism $\dot D_T\to D_T$ induces a~morphism $\bG(D_T)\to\bG(\dot D_T)$. It is obvious that this morphism is injective and we identify $\bG(D_T)$ with its image.) If $\bG$ is semisimple, then $\Gr_\bG$ is an inductive limit of schemes over $U$ (see~\cite[Prop.~5.11]{FedorovExotic}). These schemes may be chosen projective over~$U$, though we will not use it.

Let $Y$ be a finite and \'etale over $U$ subscheme of $\A^1_U$ (automatically closed). Assume also that $Y\ne\emptyset$, then the projection $Y\to U$ is surjective (being both open and closed). Let $\cE$ be a $\bG$-bundle over $\P^1_U$. A~\emph{modification\/} of $\cE$ at $Y$ is a pair $(\cF,\tau)$, where $\cF$ is a $\bG$-bundle over $\P_U^1$ and $\tau$ is an isomorphism
\[
    \cF|_{\P^1_U-Y}\xrightarrow{\tau}\cE|_{\P^1_U-Y}
\]
(cf.~\cite[Sect.~7.3]{FedorovExotic}). We have an obvious notion of an isomorphism of modifications of $\cE$ at $Y$.

Fix a $\bG$-bundle $\cE$ over $\P^1_U$ and assume that it is trivial in a Zariski neighbourhood of $Y\subset\A^1_U$. Fix such a trivialization $\sigma$. Let $\Psi_\sigma$ be the functor, sending a $U$-scheme $T$ to the set of isomorphism classes of modifications of $\cE|_{\P^1_T}$ at $Y\times_UT$. Recall~\cite[Prop.~7.5]{FedorovExotic}:

\begin{proposition}\label{pr:ModAffGr}
The functor $\Psi_\sigma$ is canonically isomorphic to the functor sending a $U$-scheme $T$ to $\Gr_\bG(Y\times_UT)$.
\end{proposition}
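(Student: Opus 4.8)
The plan is to present a modification of $\cE|_{\P^1_T}$ at $Y_T:=Y\times_U T$ as gluing data along the formal neighbourhood of $Y_T$ and to recognize this data as a point of the affine Grassmannian. The geometric input is the following. Since $U=\spec R$ is connected and $Y\to U$ is finite étale, $A:=\mathcal{O}(Y)$ is locally free of constant rank $d$ over $R$, and the ideal of $Y$ in $R[t]$ is generated by a single monic polynomial $f$ of degree $d$: taking $f$ to be the characteristic polynomial of multiplication by $t$ on $A$, Cayley--Hamilton gives $f\in I$, and equality of ranks forces $I=(f)$. Moreover $A=R[t]/(f)$ is étale over $R$. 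Hence the $(f)$-adic completion of $R[t]$ is canonically $A[[u]]$ with $u\mapsto f$: as $A$ is étale over $R$ it lifts uniquely across the complete thickening, and comparing $f$-adically gives the identification. Base-changing along $T\to U$ identifies the formal completion of $\A^1_T$ along $Y_T$ with $D_{Y\times_U T}$ and the punctured completion with $\dot D_{Y\times_U T}$, the loop parameter being $f$.

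To set up the correspondence, fix $T$ and restrict everything to the formal disk. The chosen trivialization of $\cE$ near $Y$ trivializes $\cE$ over $D_{Y_T}$ and over $\dot D_{Y_T}$. Given a modification $(\cF,\tau)$, composing $\tau$ with this reference trivialization trivializes $\cF$ over $\dot D_{Y\times_U T}$; on the other hand $\cF|_{D_{Y\times_U T}}$ is a $\bG$-bundle on the disk, which is trivial after an étale cover of $Y\times_U T$ because $\bG$ is smooth and affine (its restriction to $\{f=0\}=Y_T$ is étale-locally trivial, and formal smoothness lifts a trivialization across the $f$-adic thickening). Choosing such a trivialization on the disk, the discrepancy over $\dot D_{Y_T}$ is an element $g\in\bG(\dot D_{Y\times_U T})$, well defined modulo the action of $\bG(D_{Y\times_U T})$, that is, a point of $\bG(\dot D_{Y\times_U T})/\bG(D_{Y\times_U T})$. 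Conversely, from such a $g$ I glue the trivial bundle on $D_{Y_T}$ to $\cE|_{\P^1_T-Y_T}$ along $\dot D_{Y_T}$ using $g$ and the reference trivialization; the Beauville--Laszlo theorem, applicable since $f$ is a non-zero-divisor and $\bG$ is affine, produces a genuine $\bG$-bundle $\cF$ on $\P^1_T$ together with its tautological $\tau$. These two constructions are mutually inverse and natural in $T$, since completions, the fibre product with $Y$, and the gluing all commute with base change.

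Finally I would sheafify. In general the trivialization of $\cF|_{D_{Y_T}}$ exists only after an étale cover of $Y\times_U T$, and changing it multiplies $g$ by an element of $\bG(D_{Y_T})$; this is exactly the ambiguity dissolved by the étale sheafification defining $\Gr_\bG$. Since $\Gr_\bG$ is evaluated at the object $Y\times_U T$ of $\Et/U$, the étale covers of $Y\times_U T$ used to trivialize the disk bundle are precisely the covers entering the sheafification. Hence the construction of the previous paragraph upgrades to a canonical natural transformation, which is an isomorphism $\Psi\xrightarrow{\sim}\bigl(T\mapsto\Gr_\bG(Y\times_U T)\bigr)$ of functors on $\Et/U$. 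I expect the main obstacle to be the Beauville--Laszlo gluing in this relative, possibly non-Noetherian, setting: one must check that a $\bG$-bundle glued along the punctured formal disk descends to an honest bundle on $\P^1_T$ and that the functor loses no information, whereas the triviality of the disk bundles and the sheafification bookkeeping are comparatively routine.
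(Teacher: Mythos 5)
First, a caveat about the comparison: the paper does not prove Proposition~\ref{pr:ModAffGr} at all --- it is recalled verbatim from \cite[Prop.~7.5]{FedorovExotic}, so your proposal can only be measured against the proof in that reference. Your route is essentially that one: identify the formal completion of $\A^1_U$ along $Y$ with $D_Y$ (your argument --- the ideal of $Y$ is generated by the monic characteristic polynomial $f$ of $t$ acting on $\cO(Y)$, and \'etaleness lets one lift $\cO(Y)$ into the completion to get $\cO(Y)[[u]]$, $u\mapsto f$ --- is exactly the right way to do this), then pass between modifications and cosets in $\bG(\dot D_{Y\times_UT})/\bG(D_{Y\times_UT})$ by Beauville--Laszlo gluing. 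Your presheaf-level dictionary is correct, including the \'etale-local triviality of $\cF|_{D_{Y\times_UT}}$ via smoothness, affineness and completeness, and your worry about Beauville--Laszlo in the non-Noetherian setting is in fact harmless: the theorem needs no Noetherian hypotheses, only that $f$ is a non-zero-divisor and that the modules being glued are $f$-regular, which holds for the (flat) algebras of $\bG$-torsors.

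The genuine gap is in your final paragraph, which you label ``comparatively routine.'' A section of $\Gr_\bG(Y\times_UT)$ is in general only \'etale-locally on $Y\times_UT$ represented by an element $g\in\bG(\dot D)$, whereas your gluing construction consumes an honest coset over all of $Y\times_UT$. So to prove that your transformation is surjective (and, for the same reason, injective) you must descend data given over $D_{Y'_T}$, for an \'etale cover $Y'_T\to Y_T:=Y\times_UT$, down to $D_{Y_T}$ --- and this is not bookkeeping, because $D_{Y'_T}$ is \emph{not} the base change $D_{Y_T}\times_{Y_T}Y'_T$ unless $Y'_T\to Y_T$ is finite (formal power series do not commute with general \'etale base change). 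You would need to prove that $D_{Y'_T}\to D_{Y_T}$ is faithfully flat and that torsor descent along it is effective, which is delicate in the non-Noetherian setting. The cleaner way to close the gap, and the one suggested by how such statements are handled in \cite{FedorovExotic}, uses that $Y_T\to T$ is finite \'etale: any \'etale cover of $Y_T$ can be refined by one of the form $Y_T\times_TT'$ with $T'\to T$ an \'etale cover (take $T'=\Res_{Y_T/T}(Y'_T)$, which is \'etale and surjective over $T$, with the counit providing the refinement). Granting this, the sheafification in the $Y_T$-variable is computed by covers pulled back from $T$, and one concludes by observing that $\Psi$ itself is an \'etale sheaf in $T$ (descent of $\bG$-bundles and of isomorphisms along $\P^1_{T'}\to\P^1_T$), so the locally bijective map of presheaves induces an isomorphism on sheafifications. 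Without one of these two ingredients you have constructed a natural transformation but not shown it is an isomorphism; with either of them your argument is complete.
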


Note that this isomorphism depends on the trivialization $\sigma$ of $\cE$ in a neighborhood of $Y$. Let $\sigma'$ be another trivialization on a (possibly different) Zariski neighborhood of $Y$. The restrictions of $\sigma$ and $\sigma'$ to the formal neighborhood of $Y$ differ by a jet $\alpha\in L^+\bG(Y)$, where the jet group scheme $L^+\bG$ represents the functor $T\mapsto\bG(D_T)$. Note that $L^+\bG$ acts on $\Gr_\bG$. The proof of the following lemma is clear from the proof of~\cite[Prop.~5.1]{FedorovExotic}.

\begin{lemma}\label{lm:ChangeOfTriv}
    The functors $\Psi_{\sigma'}$ and $\Psi_\sigma\circ\tilde\alpha$ are canonically isomorphic, where $\tilde\alpha$ stands for the automorphism of $\Gr_\bG$ given by the action of $\alpha$.
\end{lemma}

\begin{remarks}
To identify the modifications with sections of affine Grassmannian, it is enough to trivialize $\cE$ on a formal neighborhood of $Y$. Such a trivialization exists if and only if $\cE|_Y$ is trivial (because $\cE$ is smooth over $\P^1_U$). If $\cE$ is not trivial on $Y$, then the modifications are parameterized by a twist of the affine Grassmannian.
\end{remarks}

The unit section of $\bG$ gives rise to a unit section $\Id_{\Gr}\in\Gr_\bG(Y)$. This section corresponds to the trivial modification $(\cE,\Id_{\cE}|_{\P_U^1-Y})$ under the above isomorphism.

It is clear that we have a natural isomorphism $\Gr_{\bG_1\times_U\bG_2}=\Gr_{\bG_1}\times_U\Gr_{\bG_2}$.

Note that there is a canonical automorphism of $\P^1_U$ switching $\P^1_U-(U\times0)$ and $\A^1_U$. We use this automorphism to identify points of $\Gr_\bG(U)$ with modifications of the trivial $\bG$-bundle at $U\times\infty$, that is, with pairs $(\cE,\tau)$, where $\cE$ is a $\bG$-bundle over $\P^1_U$, $\tau$ is a trivialization of $\cE$ over $\A^1_U$.

The following is a slight generalization of~\cite[Prop.~7.1]{FedorovExotic}.
\begin{lemma}\label{lm:ExtendSect}
Let $Y$ be a connected affine scheme; let $y_1,\ldots,y_n\in Y$ be closed points. Let $\bH$ be the Weil restriction of a simple simply-connected $Y'$-group scheme $\bH'$ via a finite \'etale morphism $Y'\to Y$ with a connected $Y'$, and assume that $\bH$ contains a proper parabolic subgroup scheme. Then the restriction morphism
\[
    \Gr_\bH(Y)\to\prod_{i=1}^n\Gr_\bH(y_i)
\]
is surjective.
\end{lemma}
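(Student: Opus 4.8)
The plan is to reduce to the simple simply-connected case over $Y'$ and then to use the open ``big cell'' structure of the affine Grassmannian furnished by the isotropy hypothesis. Write $\bH=\Res_{Y'/Y}\bH'$ with $\bH'$ simple simply-connected over $Y'$. The universal property of Weil restriction, combined with the definition of $\Gr_\bH$ through $\bH(\dot D_T)/\bH(D_T)$ and the fact that $S\mapsto S((t))$ commutes with the finite \'etale base change $Y'\to Y$, yields a canonical isomorphism $\Gr_\bH(T)\simeq\Gr_{\bH'}(Y'\times_YT)$, functorial in $T$. Evaluating at $T=Y$ and at $T=y_i$, and noting that each $Y'\times_Yy_i$ is a finite disjoint union of closed points of the connected affine scheme $Y'$, the restriction morphism of the lemma is identified with the analogous restriction for $\bH'$ over $Y'$ at the finite set of closed points $\bigcup_i(Y'\times_Yy_i)$. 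Because parabolic subgroup schemes of a Weil restriction are themselves Weil restrictions of parabolic subgroup schemes, the proper parabolic of $\bH$ produces a proper parabolic of $\bH'$. Hence it suffices to treat $\bH$ simple, simply-connected and isotropic over a connected affine $Y=\spec R$, the target being a product over finitely many closed points $y_1,\dots,y_n$.

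For the geometric core, fix a proper parabolic $\bP\subset\bH$ with an opposite $\bP^-$ and unipotent radical $\bU^-$ of $\bP^-$. Under the identification (set up just before the lemma) of $\Gr_\bH$-points with modifications of the trivial bundle at $Y\times\infty$ trivialized over $\A^1_Y$, the negative-loop unipotent subgroup attached to $\bU^-$ (the $\bU^-$-valued loops trivial at $\infty$) maps isomorphically onto an open sub-ind-scheme $C\subset\Gr_\bH$, the opposite big cell. As $\bU^-$ is filtered by vector group schemes, $C$ is, as a $Y$-scheme, an increasing union of affine-space bundles $V_1\subset V_2\subset\cdots$ attached to locally free $\cO_Y$-modules. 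The structural input I need, which is precisely the affine-Grassmannian theory of \cite{FedorovExotic}, is twofold: (a) $\Gr_\bH$ is connected (here simple-connectedness of $\bH$ enters) and is exhausted by the translates of $C$ under the cocharacter $\lambda$ defining $\bP$, translation deeper into the cell being monotone; and (b) by ind-projectivity of $\Gr_\bH$ every field-valued point lies in a bounded piece and is carried into $C$ by $\lambda(t)^N$ for $N$ large.

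Granting (a) and (b), the endgame is routine over the affine base. Each $s_i\in\Gr_\bH(y_i)$ lies in a bounded piece, so a single exponent $N$ (the maximum over the finitely many points) gives $\lambda(t)^N\cdot s_i\in C(y_i)$ for all $i$, all lying in one affine-space bundle $V_m$ associated to a locally free sheaf $\cL$. For such $\cL$ on $Y=\spec R$ the restriction map $\Gamma(Y,\cL)\to\prod_i\cL\otimes_R\kappa(y_i)$ is surjective: the map $R\to\prod_i\kappa(y_i)$ is surjective by the Chinese Remainder Theorem for the distinct maximal ideals $\fm_i$, and surjectivity is preserved upon tensoring with $\cL$; the vector-group filtration of $\bU^-$ contributes no obstruction since quasi-coherent sheaves on the affine $Y$ have no higher cohomology. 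Thus the points $\lambda(t)^N\cdot s_i$ lift to a single $\sigma\in C(Y)\subset\Gr_\bH(Y)$, and $\lambda(t)^{-N}\cdot\sigma\in\Gr_\bH(Y)$ restricts to $s_i$ at each $y_i$, which is the desired surjectivity.

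The main obstacle is establishing (a) and (b) over a general, not necessarily semi-local, affine base. Unlike the semi-local situation of Remark~\ref{rm:isotropy}(ii), the existence of a proper parabolic over connected affine $Y$ need not provide a \emph{global} split cocharacter $\lambda:\Gm{Y}\to\bH$, and hence not an obvious single global loop transformation sweeping the finitely many bounded points into one cell. Controlling this in families — with the relevant torus possibly non-split and $\bU^-$ possibly non-trivial — is exactly where the structure theory of affine Grassmannians of isotropic group schemes is indispensable, and is the reason the lemma is only a \emph{slight} generalization of the corresponding result of \cite{FedorovExotic} rather than a purely formal consequence.
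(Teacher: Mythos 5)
Your reduction to a simple simply-connected group scheme over the connected affine base (transporting the proper parabolic through the Weil restriction and identifying the two restriction maps) is exactly the paper's first step. The geometric core, however, has a genuine gap, and it is the one you concede in your last paragraph: claims (a) and (b) are not established, and they are the entire content of the lemma. Worse, they fail as stated. Over a connected affine, non-semi-local $Y$, a proper parabolic subgroup scheme yields no cocharacter $\Gm{Y}\to\bH$ at all (the equivalence with isotropy in Remark~\ref{rm:isotropy}(ii) requires $Y$ semi-local), so there is no global loop $\lambda(t)$ to translate by. And even fiberwise, over a field $k$ where a split $\lambda$ exists, your cell $C$ --- the orbit of $\bU^-$-valued loops through the base point $\Id_{\Gr}$ --- is a \emph{semi-infinite} orbit, not an open sub-ind-scheme (openness of the big cell requires the full negative loop group, which is not filtered by vector groups, so using it instead would break your Chinese-remainder endgame); and translation by $\lambda(t)^N$ permutes the semi-infinite orbits rather than absorbing them into $C$. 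Concretely, the two points $\lambda(t)^{-1}\cdot\Id_{\Gr}$ and $\lambda(t)^{-2}\cdot\Id_{\Gr}$ lie in disjoint $\bU^-\bigl(k((t))\bigr)$-orbits, distinguished by their image under the standard retraction to the affine Grassmannian of the Levi, and $\lambda(t)^N$ shifts that image by a fixed amount; hence no single $N$ moves both points into $C$, contrary to your ``maximum over the finitely many points'' step.

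The paper's proof replaces the cell $C$ by an object that needs no cocharacter and does absorb everything: the subgroup functor $\bE$, where $\bE(T)\subset\bH(T)$ is generated by $\bU^+(T)$ and $\bU^-(T)$ for a pair of opposite parabolics $\bP^\pm$ --- the opposite $\bP^-$ exists over the affine $Y$ by \cite[Exp.~XXVI, Cor.~2.3, Thm.~4.3.2(a)]{SGA3-3}. The argument then splits into two surjectivities. First, $\bE(\dot D_Y)\to\prod_{i}\bE(\dot D_{y_i})$ is surjective by \cite[Lm.~7.3]{FedorovExotic}; this elementary Chinese-remainder step on vector-group filtrations is the only part of your endgame that survives, and it is exactly why one must work with unipotent loops rather than with the full big cell. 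Second, $\bE(\dot D_{y_i})\to\Gr_\bH(y_i)$ is surjective for each $i$; this is the true replacement for your sweeping step, and it is a theorem about groups over fields rather than about cells: a point of $\Gr_\bH(y_i)$ is represented by some $\beta\in H\bigl(k((t))\bigr)$ after invoking Zariski-local triviality \cite[Thm.~3.8(a)]{GilleTorseurs}, and Gille's results \cite[Lemma~4.5(1), Fait~4.3(2)]{Gille:BourbakiTalk} --- applicable because $H$ is simple, simply-connected, isotropic, and $k((t))$ is infinite --- give a factorization $\beta=\beta'\beta''$ with $\beta'\in\bE\bigl(k((t))\bigr)$ and $\beta''\in H\bigl(k[[t]]\bigr)$, so that $\beta'$ lifts the point. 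This decomposition is the input your proposal cannot do without, and it cannot be recovered from cell-translation arguments over a general affine base.
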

\begin{proof}
If $\cE'$ is an $\bH'$-bundle over $\P^1_{Y'}$, then its Weil restriction $\Res_{Y'/Y}(\cE')$ is an $\bH$-bundle over $\P^1_Y$, and a trivialization of $\cE'$ over $\A^1_{Y'}$ gives rise to a trivialization of $\Res_{Y'/Y}(\cE')$ over $\A^1_Y$. Thus we get a map $\Gr_{\bH'}(Y')\to\Gr_\bH(Y)$. It is easy to check that this map is a bijection (cf.~\cite[Exp.~XXIV, Prop.~8.2]{SGA3-3}).

Therefore, denoting by $\{y'_1,\ldots,y'_m\}$ the preimage in $Y'$ of the set $\{y_1,\ldots,y_n\}$,  we get a commutative diagram
\[
\begin{CD}
\Gr_{\bH'}(Y')@>>>\prod_{i=1}^m\Gr_{\bH'}(y'_i)\\
@V\simeq VV @V\simeq VV\\
\Gr_\bH(Y)@>>>\prod_{i=1}^n\Gr_\bH(y_i).
\end{CD}
\]

\emph{Claim. $\bH'$ has a proper parabolic subgroup scheme.} Indeed, let $\cP'$ be the variety of parabolic subgroup schemes of $\bH'$, let $\cP$ be the variety of parabolic subgroup schemes of $\bH$ (cf.~\cite[Exp.~XXVI, Cor.~3.5]{SGA3-3}). Since a parabolic subgroup scheme of a reductive group scheme gives rise to a parabolic subgroup scheme of its Weil restriction, we get a morphism $\Res_{Y'/Y}\cP'\to\cP$, where $\Res$ is the Weil restriction functor. Let us check that this morphism is an isomorphism. Since this is enough to check after an \'etale base change, we may assume that $Y'$ is a disjoint union of $n$ copies of $Y$ for an integer $n$ and that $\bH'$ is split. Then $\bH$ is the product of $n$ copies of $\bH'$ and the statement reduces to the statement that the parabolic subgroup schemes of $\bH\simeq\prod_{i=1}^n\bH'$ are exactly subgroup schemes of the form $\bP_1\times\ldots\times\bP_n$, where each $\bP_i$ is a parabolic subgroup scheme of $\bH$. Choose a split maximal torus and a Borel subgroup scheme in $\bH'$, we get a split maximal torus and a Borel subgroup scheme in $\bH\simeq\prod_{i=1}^n\bH'$. Then we have a notion of standard parabolic subgroup schemes, and it is clear that every standard parabolic subgroup scheme in $\bH$ is the product of standard parabolic subgroup schemes in $\bH'$. It remains to note that every parabolic subgroup scheme is locally conjugate to a standard one.

Thus every parabolic subgroup scheme of $\bH$ gives rise to a parabolic subgroup scheme of $\bH'$. It is clear that a proper parabolic subgroup scheme of $\bH$ gives rise to a proper parabolic subgroup scheme of $\bH'$. This proves the claim.

Thus, replacing $\bH$ with $\bH'$, we may assume from the beginning that $\bH$ is a simple simply-connected group scheme. The rest is very similar to the proof of~\cite[Prop.~7.1]{FedorovExotic} but we give a proof for the sake of completeness. Let $\bP^+$ be a proper parabolic subgroup scheme of $\bH$. Since $Y$ is an affine scheme, by~\cite[Exp.~XXVI, Cor.~2.3, Thm.~4.3.2(a)]{SGA3-3}, there is an opposite to $\bP^+$ parabolic subgroup scheme $\bP^-\subset\bH$. Let $\bU^+$ be the unipotent radical of $\bP^+$, and let $\bU^-$ be the unipotent radical of $\bP^-$. We will write $\bE$ for the functor, sending a $Y$-scheme~$T$ to the subgroup $\bE(T)$ of the group $\bH(T)$ generated by the subgroups $\bU^+(T)$ and $\bU^-(T)$ of the group $\bH(T)$. (Cf.~\cite[Def.~5.23]{FedorovPanin} and~\cite[Def.~7.2]{FedorovExotic}). As in the proof of~\cite[Prop.~7.1]{FedorovExotic}, we have a diagram
\[
\begin{CD}
\bE(\dot D_Y) @>>>\prod_{i=1}^n\bE(\dot D_{y_i})\\
@VVV @VVV \\
\Gr_\bH(Y) @>>>\prod_{i=1}^n\Gr_\bH(y_i).
\end{CD}
\]
By~\cite[Lm.~7.3]{FedorovExotic} (whose easy proof is valid for any reductive group scheme) the top horizontal map is surjective. Thus it is enough to show that the map
\[
    \bE(\dot D_{y_i})\to\Gr_\bH(y_i)
\]
is surjective for each $i$. Set $k:=k(y_i)$ and $H:=\bH_{y_i}$. Consider an element of $\Gr_\bH(y_i)=\Gr_H(k)$, represented by a pair $(\cE,\tau)$, where $\cE$ is an $H$-bundle over $\P^1_k$, $\tau$ is a trivialization of $\cE$ over $\A^1_k$. By~\cite[Thm.~3.8(a)]{GilleTorseurs}, $\cE$ is Zariski locally trivial. Let us trivialize $\cE$ in a formal neighbourhood of $\infty$, this trivialization and $\tau$ differ by an element $\beta\in H\bigl(k((t))\bigr)$. By construction, the image of $\beta$ under the projection $H\bigr(k((t))\bigl)\to\Gr_\bH(y_i)$ is $(\cE,\tau)$.

Next, $H$ is simple and simply-connected and the field $k((t))$ is infinite. Thus we may use~\cite[Lemma~4.5(1)]{Gille:BourbakiTalk} and~\cite[Fait~4.3(2)]{Gille:BourbakiTalk} to conclude that we can write $\beta=\beta'\beta''$ with $\beta'\in\bE\bigl(k((t))\bigr)=\bE(\dot D_{y_i})$, $\beta''\in H\bigr(k[[t]]\bigl)$. Clearly, $\beta'$ lifts $(\cE,\tau)$ and we are done.
\end{proof}

Note that, instead of using~\cite[Thm.~3.8(a)]{GilleTorseurs} in the proof above, one can use the Grothendieck--Serre conjecture for discrete valuation rings~\cite{Nisnevich1}. The same applies to the reference in the proof of Theorem~\ref{th:FP}.

\subsection{Lifting modifications to the simply-connected central cover}
Let, as before, $\phi\colon G^{\,\sc}\to G$ be the simply-connected central cover of a semisimple $k$-group scheme $G$, where $k$ is a field. This gives a morphism of ind-schemes $\Gr_{G^{\,\sc}}\to\Gr_G$. The goal of this section is to prove the following proposition (cf.~Lemma~\ref{lm:ChangeOfTriv}).
\begin{proposition}\label{pr:liftsTOscGr}
Let $K$ be any field containing $k$. The image of the set $\Gr_{G^{\,\sc}}(K)$ in $\Gr_G(K)$ is $L^+G(K)$-invariant.
\end{proposition}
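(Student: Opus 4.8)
The plan is to work with the explicit loop-group description of the two affine Grassmannians and to exploit that $\phi$ is a \emph{central} isogeny. Write $LG:=G\bigl(K((t))\bigr)$ and $L^+G:=G\bigl(K[[t]]\bigr)$, and similarly for $G^{sc}$, so that on $K$-points $\Gr_G=LG/L^+G$ with $L^+G$ acting by left multiplication, and $\phi$ induces the map $\Gr_{G^{sc}}(K)\to\Gr_G(K)$ coming from $L\phi:LG^{sc}\to LG$. Under this description the image of $\Gr_{G^{sc}}(K)$ is exactly the set of classes $[\phi(b)]$ with $b\in G^{sc}\bigl(K((t))\bigr)$, that is, the image in $\Gr_G$ of the subgroup $N:=\phi\bigl(G^{sc}(K((t)))\bigr)\subseteq LG$. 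First I would record that every point of $\Gr_{G^{sc}}(K)$ is representable by an honest element $b\in G^{sc}\bigl(K((t))\bigr)$; this is the only nontrivial input and is discussed below.

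The heart of the argument is the observation that $N$ is a \emph{normal} subgroup of $LG$. Indeed, since $\mu:=\ker\phi$ is central in $G^{sc}$, the connecting map $\delta:G\bigl(K((t))\bigr)\to H^1\bigl(K((t)),\mu\bigr)$ attached to $1\to\mu\to G^{sc}\xrightarrow{\phi}G\to1$ is a group homomorphism, and its kernel is precisely $N$; hence $N\trianglelefteq LG$. Granting this, invariance is a one-line computation. Given $g\in L^+G(K)=G\bigl(K[[t]]\bigr)$ and a point $x=[\phi(b)]$ in the image, we have
\[
    g\cdot x=[g\,\phi(b)]=\bigl[(g\,\phi(b)\,g^{-1})\,g\bigr]=[g\,\phi(b)\,g^{-1}],
\]
the last equality holding because right multiplication by $g\in L^+G$ does not change the class in $LG/L^+G$. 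By normality of $N$ we may write $g\,\phi(b)\,g^{-1}=\phi(c)$ for some $c\in G^{sc}\bigl(K((t))\bigr)$, and therefore $g\cdot x=[\phi(c)]$ again lies in the image of $\Gr_{G^{sc}}(K)$. Thus the image is $L^+G(K)$-invariant.

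The main obstacle is purely set-theoretic: $\Gr_G$ is defined by \emph{sheafification}, so a priori the image of the set $\Gr_{G^{sc}}(K)$ need not coincide with the $K$-points of the image ind-subscheme, and a $K$-point need not be representable by a loop-group element. What the computation actually requires is only the representability of points of $\Gr_{G^{sc}}(K)$, which amounts to saying that a $G^{sc}$-torsor over $K[[t]]$ that is trivial over $K((t))$ is already trivial. This is the Grothendieck--Serre statement for the complete discrete valuation ring $K[[t]]$, which is classical (Nisnevich) and independent of the results proved in this paper, so no circularity arises; the target point $g\cdot x=[\phi(c)]$ is then automatically representable as well, and the computation above becomes entirely honest. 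As a geometric sanity check one may note that $L^+G$ is connected, hence preserves the connected components of $\Gr_G$, and that $\Gr_{G^{sc}}$, being connected and base-point preserving, maps into the neutral component; the point of the argument above is that this containment is compatible with the action on the nose, so that I never have to identify the image with the full set of $K$-points of the neutral component.
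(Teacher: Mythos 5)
Your proof is correct, and it takes a genuinely different route from the paper's. The paper works with the stratification $\Gr_G=\bigsqcup_{\hat\lambda\in X_*/(W\leftthreetimes\Out)}\Gr_G^{\hat\lambda}$ constructed in~\cite{FedorovExotic}: it shows that a $K$-point of $\Gr_G$ lifts to $\Gr_{G^{sc}}(K)$ exactly when its stratum index lies in $X_*^{sc}/(W\leftthreetimes\Out)$ (the key step being that $\pi$ induces a bijection on $K$-points of matching strata, proved via the fibration over the flag variety $F_G^\lambda$ and a comparison of unipotent radicals), after which invariance is automatic because each stratum is $L^+G$-invariant. You instead identify the image of $\Gr_{G^{sc}}(K)$ with the image of the subgroup $N=\phi\bigl(G^{sc}(K((t)))\bigr)$ of the loop group, observe that $N$ is normal because it is the kernel of the connecting map to $H^1\bigl(K((t)),\mu\bigr)$ with $\mu=\ker\phi$ central, and finish with a one-line coset computation; this is a sound argument. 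Two small points: since $\mu$ need not be smooth in positive characteristic, the connecting map should be taken in fppf cohomology (or argue directly: a local lift $\tilde g$ of $g$ is unique up to central elements, so $\tilde g b\tilde g^{-1}$ descends to $K((t))$), and your one real input --- that every point of $\Gr_{G^{sc}}(K)$ is represented by an element of $G^{sc}\bigl(K((t))\bigr)$ --- is correctly isolated and correctly sourced, since it amounts to generically trivial $G^{sc}$-torsors over the complete discrete valuation ring $K[[t]]$ being trivial (Nisnevich), which involves no circularity. Alternatively you could obtain this representability exactly as the paper does inside the proof of Lemma~\ref{lm:ExtendSect}: a point of $\Gr_{G^{sc}}(K)$ is a pair $(\cE,\tau)$ with $\cE$ trivial over $\A^1_K$, and \cite[Thm.~3.8(a)]{GilleTorseurs} makes $\cE$ Zariski locally trivial, hence trivial on a formal neighborhood of $0$; this keeps the inputs uniform with the rest of the paper and sidesteps any fine print about residue-field hypotheses in Nisnevich's theorem. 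As for what each approach buys: yours is shorter and purely group-theoretic, while the paper's is self-contained relative to~\cite{FedorovExotic} and yields finer information --- the stratum-by-stratum bijection $\Gr_{G^{sc}}^{\hat\lambda}(K)\to\Gr_G^{\hat\lambda}(K)$ --- which is precisely what the remark following the proposition builds on.
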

\begin{proof}
Since $L^+G(K)=L^+G_K(K)$ and $\Gr_G(K)=\Gr_{G_K}(K)$, performing a base change we may assume that $K=k$.

For split group schemes there is a well-known stratification of Grassmannians by $L^+G$-orbits; the orbits are parameterized by the Weyl group orbits in the co-character lattice. If the group scheme is not an inner form, we have a coarser stratification constructed in~\cite{FedorovExotic}. We will recall this stratification now.

Let $G^{\,\spl}$ be the split semisimple $k$-group scheme of the same type as $G$. Let $T^{\,\spl}\subset G^{\,\spl}$ be a maximal (split) torus. Following~\cite[Sect.~5.4.2]{FedorovExotic} put
\begin{equation*}
    X_*:=\Hom(\Gm{k},T^{\,\spl})\subset T^{\,\spl}\bigl(k((t))\bigr).
\end{equation*}
For $\lambda\in X_*$ denote by $t^\lambda$ the corresponding element of $T^{\,\spl}\bigl(k((t))\bigr)$. Abusing notation, we also denote by $t^\lambda$ the projection to $\Gr_{G^{\,\spl}}(k)$ of
\[
    t^\lambda\in T^{\,\spl}\bigl(k((t))\bigr)\subset G^{\,\spl}\bigl(k((t))\bigr).
\]
Denote by $\Gr_{G^{\,\spl}}^\lambda$ the $L^+G^{\,\spl}$-orbit of $t^\lambda$; this is a locally closed subscheme of $\Gr_{G^{\,\spl}}$. We have $\Gr_{G^{\,\spl}}^\lambda=\Gr_{G^{\,\spl}}^\mu$ if and only if~$\lambda$ and $\mu$ are in the same $W$-orbit (here $W$ is the Weyl group of $G^{\,\spl}$). By~\cite[Prop.~5.7]{FedorovExotic}, we get a stratification (in the sense of~\cite[Sect.~5.3]{FedorovExotic})
\begin{equation}\label{eq:stratspl}
    \Gr_{G^{\,\spl}}=\bigcup_{\lambda\in X_*/W}\Gr_{G^{\,\spl}}^\lambda.
\end{equation}
Next, $G$ is a twist of $G^{\,\spl}$ by an $\Aut(G^{\,\spl})$-bundle $\cT$ over $\spec k$, so by~\cite[Prop.~5.4]{FedorovExotic} we get $\Gr_G=\cT\times^{\Aut(G^{\,\spl})}\Gr_{G^{\,\spl}}$. Unfortunately, the orbits $\Gr_{G^{\,\spl}}^\lambda$ are not $\Aut(G^{\,\spl})$-invariant, so we need a coarser stratification. Note that $\Out:=\Aut(G^{\,\spl})/G^{\,\spl,\ad}$ acts on $W$ so we get a semi-direct product $W\leftthreetimes\Out$. For $\hat\lambda\in X_*/(W\leftthreetimes\Out)$, write $\Orb(\hat\lambda)$ for the corresponding $\Out$-orbit on $X_*/W$ and put
\[
    \Gr^{\hat\lambda}_{G^{\,\spl}}:=\bigsqcup_{\lambda\in\Orb(\hat\lambda)}\Gr^\lambda_{G^{\,\spl}}.
\]
Note that, if $\lambda_1,\lambda_2\in\Orb(\hat\lambda)$, then $\Gr^{\lambda_1}_G$ is isomorphic to $\Gr^{\lambda_2}_G$, so these orbits have the same dimension. It follows that $\Gr^{\lambda_1}_G$ cannot lie in the closure of $\Gr^{\lambda_2}_G$. Thus, the above is, in fact, a disjoint union of schemes.

The locally closed subsets $\Gr^{\hat\lambda}_{G^{\,\spl}}$ are $\Aut(G^{\,\spl})$-invariant so we put
\[
    \Gr^{\hat\lambda}_G:=\cT\times^{\Aut(G^{\,\spl})}\Gr^{\hat\lambda}_{G^{\,\spl}}.
\]
Now the stratification~\eqref{eq:stratspl} gives rise to a stratification (\cite[Prop.~5.12]{FedorovExotic})
\begin{equation}\label{eq:strat}
    \Gr_G=\bigcup_{\hat\lambda\in X_*/(W\leftthreetimes\Out)}\Gr_G^{\hat\lambda}.
\end{equation}
Let $G^{\,\sc,\spl}$ be the simply-connected central cover of $G^{\,\spl}$, $T^{\,\sc,\spl}$ be the preimage of $T^{\,\spl}$ in $G^{\,\sc,\spl}$ (this is a maximal split torus in $G^{\,\sc,\spl}$), let $X_*^{\sc}$ be the co-character lattice of $T^{\,\sc,\spl}$. Then we have similarly to the above
\begin{equation*}
    \Gr_{G^{\,\sc}}=\bigcup_{\hat\lambda\in X_*^{\sc}/(W\leftthreetimes\Out)}\Gr_{G^{\,\sc}}^{\hat\lambda};
\end{equation*}
this decomposition is compatible with~\eqref{eq:strat} and the projection $\pi\colon\Gr_{G^{\,\sc}}\to\Gr_G$.

Now we return to the proof of Proposition~\ref{pr:liftsTOscGr}. Consider a point $\alpha\in\Gr_G(k)$. By~\eqref{eq:strat} it belongs to $\Gr_G^{\hat\lambda}(k)$ for some $\hat\lambda\in X_*/(W\leftthreetimes\Out)$. We claim that $\alpha$ lifts to a point of $\Gr_{G^{\,\sc}}(k)$ if and only if $\hat\lambda\in X^{\sc}_*/(W\leftthreetimes\Out)$ (we identify $X_*^{\sc}$ with a sublattice of $X_*$). The proposition follows from this statement because $\Gr_G^{\hat\lambda}$ is manifestly $L^+G$-invariant.

Recall that the projection $\pi\colon\Gr_{G^{\,\sc}}\to\Gr_G$ takes $\Gr_{G^{\,\sc}}^{\hat\lambda}$ to $\Gr_G^{\hat\lambda}$. This proves the `only if' part of our claim. For the converse, it suffices to prove the following lemma.
\begin{lemma}
Assume that $\hat\lambda\in X^{\sc}_*/(W\leftthreetimes\Out)$. Then $\pi$ induces an isomorphism of schemes $\Gr_{G^{\,\sc}}^{\hat\lambda}\to\Gr_G^{\hat\lambda}$.
\end{lemma}
\begin{proof}
First of all, it is enough to prove the statement after passing to an algebraic closure of $k$, in which case $G$ is split and we have a finer stratification~\eqref{eq:stratspl}. Thus we assume that $k$ is algebraically closed and show that for $\lambda\in X_*^{\sc}/W$ the canonical morphism $\pi'\colon\Gr_{G^{\,\sc}}^\lambda\to\Gr_G^\lambda$ is an isomorphism.

We say that a parabolic subgroup scheme $P\subset G$ is of type $\lambda$ if the Weyl group of a Levi factor of $P$ is the stabilizer of $\lambda$ in $W$. Let $F_G^\lambda$ be the scheme of parabolic subgroups of type $\lambda$. In~\cite[Sect.~5.4.3]{FedorovExotic} we have constructed a morphism $\Gr_G^\lambda\to F_G^\lambda$. We have a similar morphism for $G^{\,\sc}$ and a commutative diagram
\begin{equation}\label{eq:CD2}
\begin{CD}
\Gr_{G^{\,\sc}}^\lambda @>\pi'>>\Gr_G^\lambda\\
@VVV @VVV\\
F_{G^{\,\sc}}^\lambda @>>> F_G^\lambda.
\end{CD}
\end{equation}
Note that the lower horizontal morphism is an isomorphism (the proof is analogous to~\cite[Exercise~5.5.8]{ConradReductive}). Since the left projection in the diagram is $G^{\,\sc}$-equivariant and $G^{\,\sc}$ acts transitively on $F_{G^{\,\sc}}^\lambda$, the generic flatness implies that this projection if flat. Similarly, the right projection is flat. Thus it is enough to check that $\pi'$ induces isomorphism of fibers.

Fix a lift of $\lambda$ to $X_*^{\sc}$ so that we have a point $t^\lambda\in\Gr_{G^{\,\sc}}^\lambda(k)$ and a point $t^\lambda\in\Gr_G^\lambda(k)$. Let $C^{\,\sc}$ be the fiber of the morphism $\Gr_{G^{\,\sc}}^\lambda\to F_{G^{\,\sc}}^\lambda$ containing $t^\lambda$; let $C^{\,\sc}$ be the fiber of the morphism $\Gr_G^\lambda\to F_G^\lambda$ containing $t^\lambda$. It is enough to show that $\pi'$ induces an isomorphism $C^{\,\sc}\to C$ because diagram~\eqref{eq:CD2} is $G^{\,\sc}$-equivariant.

For a $k$-group scheme $H$, we denote by $H^{(1)}$ the kernel of the evaluation map $L^+H\to H$. We note that this is just the group scheme of jets into $H$ based at the identity. We claim that $C$ is the $G^{(1)}$-orbit of $t^\lambda$. Indeed, we have a semidirect product decomposition $L^+G=G^{(1)}\leftthreetimes G$. As explained in~\cite[Sect.~5.4.3]{FedorovExotic}, the morphism $\Gr_G^\lambda\to F_G^\lambda$ is induced by the evaluation map
\[
    L^+G=G^{(1)}\leftthreetimes G\to G\to G\cdot t^\lambda=F_G^\lambda.
\]
Let $P^\lambda$ be the stabilizer of $t^\lambda$ in $G$. We see that $C=G^{(1)}P^\lambda\cdot t^\lambda=G^{(1)}\cdot t^\lambda$.

Next, let $U$ be a unipotent subgroup scheme of $G$ opposite to $P^\lambda$. We claim that $C=U^{(1)}\cdot t^\lambda$. To this end, it is enough to check that $G^{(1)}=U^{(1)}\cdot P^{(1)}$ and that $P^{(1)}$ stabilizes $t^\lambda$. For the first statement we note that the multiplication map $U\times P\to G$ induces an isomorphism on the Level of Lie algebras, so it induces an isomorphism of jets based at the identity. Similarly, the second statement reduced to a statement about the Lie algebras.

Let $U^{\,\sc}\subset G^{\,\sc}$ be the preimage of $U$ under the projection $G^{\,\sc}\to G$. Then $U^{\,\sc}$ is a unipotent subgroup scheme opposite to the stabilizer of $t^\lambda$ in $G^{\,\sc}$. Similarly to the above we check that $C^{\,\sc}=(U^{\,\sc})^{(1)}\cdot t^\lambda$.

Note that the central isogeny $\phi\colon G^{\,\sc}\to G$ induces an isomorphism $U^{\,\sc}\to U$. Thus we have an isomorphism $(U^{\,\sc})^{(1)}\to U^{(1)}$. The stabilizer of $t^\lambda$ in $U^{(1)}$ is
\[
    U^{(1)}\cap(t^\lambda\cdot L^+G\cdot t^{-\lambda})=U^{(1)}\cap(t^\lambda\cdot L^+U\cdot t^{-\lambda})
\]
and we have a similar formula for the stabilizer in $(U^{\,\sc})^{(1)}$. Thus the above isomorphism identifies stabilizers, so it induces an isomorphism $C^{\sc}\to C$. The lemma follows.
\end{proof}

The lemma completes the proof of the claim. Proposition~\ref{pr:liftsTOscGr} is proved.
\end{proof}

\begin{remark}
If the characteristic of $k$ does not divide the order of $\pi_1(G)$, it is known that $\pi\colon\Gr_{G^{\,\sc}}\to\Gr_G$ induces an isomorphism between $\Gr_{G^{\,\sc}}$ and the neutral connected component of $\Gr_G$. On the other hand, it is not difficult to derive from the above proof that in general $\pi$ is a morphism from $\Gr_{G^{\,\sc}}$ to the neutral connected component of $\Gr_G$ inducing an isomorphism on $K$-points for every field $K$. The above proposition follows from this fact because the neutral component is preserved under the action of the connected group scheme $L^+G$. One expects that this morphism is a universal homeomorphism. We refer the reader to~\cite[Prop.~3.5]{HainesRicharzNormality} for a similar statement.
\end{remark}

\subsection{Principal bundles with topologically trivial fibers over families of affine lines}
In this section we prove an analogue of~\cite[Thm.~3]{FedorovPanin} and of~\cite[Thm.~1.8]{PaninFiniteFieldsIzvestiya} where the group scheme is allowed to be arbitrary reductive but the $\bG$-bundle is required to be topologically trivial on closed fibers. Recall that a~semisimple group scheme over a scheme $U$ is called \emph{isotropic} if it contains a one-dimensional torus $\Gm{U}$. If $U$ is connected affine, and semi-local, then by~\cite[Exp.~XXVI, Cor.~6.14]{SGA3-3} this is equivalent to the group scheme containing a proper parabolic subgroup scheme. For any scheme $S$ we denote by $\Pic(S)$ the group of isomorphism classes of line bundles over $S$. Recall that $\bZ$ is the center of $\bG$ and $\bG^{\ad}=\bG/\bZ$.

\begin{theorem}\label{th:MainThm2}
Let $U$ be a connected affine semi-local scheme over a field. Let $\bG$ be a~reductive group scheme over $U$. Let $\bG^1$, \ldots, $\bG^r$ be the factors of the adjoint group scheme $\bG^{\ad}$ \parl{}cf.~equation~\eqref{eq:prod}\parr. Let $Z\subset\A^1_U$ be a closed subscheme finite over $U$. Let $Y\subset\A^1_U$ be a closed subscheme finite and \'etale over $U$. Assume that $Y\cap Z=\emptyset$. Assume further that for each $i=1,\ldots,r$ there is a connected component $Y^i\subset Y$ satisfying two properties: {\rm(}i{\rm)} $(\bG^i)_{Y^i}$ is isotropic and {\rm(}ii{\rm)} for every closed point $u\in U$ such that $\bG^i_u$ is isotropic we have $\Pic(\P^1_u-Y^i_u)=0$. Finally, assume that
the relative line bundle $\cO_{\P^1_U}(1)$ trivializes on $\P^1_U-Y$.

Let $\cG$ be a~principal $\bG$-bundle over $\P^1_U$ such that its restriction to $\P^1_U-Z$ is trivial and such that for all closed points $u\in U$ the $\bG_u^{\ad}$-bundle $(\cG|_{\P^1_u})/\bZ_u$ is topologically trivial. Then the restriction of $\cG$ to $\P^1_U-Y$ is also trivial.
\end{theorem}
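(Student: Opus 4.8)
The plan is to establish the triviality of $\cG|_{\P^1_U-Y}$ fiberwise over the closed points of $U$ and then to globalize by means of affine Grassmannians. Throughout I use that $\cG$ is trivial in a Zariski neighbourhood of $Y$: since $Z\subset\A^1_U$ and $Y\cap Z=\emptyset$, we have $Y\subset\P^1_U-Z$, where $\cG$ is trivial; fix such a trivialization $\theta$. Applying Proposition~\ref{pr:ModAffGr} to the trivial bundle, the $\bG$-bundles on $\P^1_U$ that are trivial near $Y$ and over $\P^1_U-Y$ are precisely the modifications of the trivial bundle at $Y$, classified functorially by $\Gr_\bG(Y\times_U-)$. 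A trivialization of $\cG$ over $\P^1_U-Y$, taken together with $\theta$, records a point of $\Gr_\bG(Y)$; thus the theorem amounts to producing a point of $\Gr_\bG(Y)$ whose associated modification of the trivial bundle is isomorphic to $\cG$.

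Before the main construction I would carry out two reductions. Using the product decomposition $\bG^{ad}\simeq\prod_{i}\bG^i$ of~\eqref{eq:prod}, I treat each adjoint factor $\bG^i$ against its assigned component $Y^i\subset Y$, and I dispose of the radical torus $\bT$ separately: a bundle under $\bT$ over $\P^1_U-Y$ is controlled by $\Pic(\P^1_U-Y)=0$ (after splitting $\bT$ \'etale-locally), so the central toric direction is trivial and the problem is reduced to the semisimple quotient $\bG/\bT$ and its adjoint factors $\bG^i$.

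For the fiberwise step, fix a closed point $u\in U$. The restriction $\cG|_{\P^1_u}$ is trivial off the finite set $Z_u$, hence Zariski locally trivial. For an isotropic factor $\bG^i_u$, \cite[Thm.~3.8(b)]{GilleTorseurs} writes the corresponding bundle as $\lambda_*\cO(1)^\times$ for a co-character $\lambda$ into a maximal split torus; its restriction to $\P^1_u-Y^i_u$ equals $\lambda_*\bigl(\cO(1)^\times|_{\P^1_u-Y^i_u}\bigr)$, which is trivial because $\Pic(\P^1_u-Y^i_u)=0$ makes $\cO(1)$ trivial there. The hypothesis that $(\bG^i)_{Y^i}$ is isotropic supplies the split torus needed for this classification along $Y^i$, while for an anisotropic $\bG^i_u$ the only Zariski locally trivial bundle is the trivial one. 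Since $Y^i_u\subset Y_u$, each factor is therefore trivial over $\P^1_u-Y_u$, and the topological triviality of $(\cG|_{\P^1_u})/\bT_u$ (Definition~\ref{def:TopTriv}) is exactly what lets me realize these fiberwise trivializations through the simply-connected central cover.

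The crux is the globalization. The fiberwise data yield, for each closed point, an element of the affine Grassmannian of the relevant simply-connected factor; I would use Proposition~\ref{pr:liftsTOscGr} (arranging topological triviality by Proposition~\ref{pr:PullbackTopTriv} where needed) to guarantee that these modification classes lie in the image of $\Gr_{G^{sc}}$, and then invoke Lemma~\ref{lm:ExtendSect}, whose surjectivity $\Gr_\bH(Y)\to\prod_i\Gr_\bH(y_i)$ is stated precisely for simply-connected Weil-restricted $\bH$, to assemble the fiberwise points into a single section over $Y$. The associated global modification agrees with $\cG$ near $Y$ and on every closed fiber, and a Nakayama-type argument over the semi-local base upgrades this to an isomorphism over $\P^1_U$, trivializing $\cG$ over $\P^1_U-Y$. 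I expect this last step to be the main obstacle: Lemma~\ref{lm:ExtendSect} is available only for simply-connected groups, whereas $\bG$ is merely reductive, and there is no a priori reason for the fiberwise trivializing classes to lift to $G^{sc}$. The topological-triviality hypothesis, through Proposition~\ref{pr:liftsTOscGr}, is exactly the device that removes this obstruction and makes the passage from the closed fibers to the whole of $U$ go through.
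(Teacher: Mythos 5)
Your proposal reproduces the paper's fiberwise analysis and Grassmannian gluing correctly in outline (trivializing $\cG$ near $Y$ via $Y\cap Z=\emptyset$, trivializing closed fibers off $Y_u$ via Gille's theorems and $\Pic(\P^1_u-Y^i_u)=0$, lifting the fiberwise classes to the simply-connected cover via Proposition~\ref{pr:liftsTOscGr}, and gluing them over $Y$ via Lemma~\ref{lm:ExtendSect}), but the endgame has a genuine gap, and it is exactly where the remaining content of the theorem lies. There is no ``Nakayama-type argument'' that upgrades agreement on closed fibers to an isomorphism over $\P^1_U$: the projection $\P^1_U\to U$ is proper, not affine, so isomorphisms do not lift from closed fibers. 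Concretely, if $\cE_0$ is a $\bG$-bundle over $U$ that is trivial at every closed point but nontrivial on $U$ (ruling such bundles out under rational triviality is the Grothendieck--Serre conjecture itself, so their non-existence cannot be invoked here), then its pullback to $\P^1_U$ is isomorphic to the trivial bundle on every closed fiber yet is globally nontrivial, as restriction to $U\times\infty$ shows. The paper's proof is organized precisely to avoid comparing two bundles over all of $\P^1_U$: it takes $\cF$ to be a modification of $\cG$ \emph{itself} at $Y$ (so $\cF\simeq\cG$ over $\P^1_U-Y$ holds by construction), and then proves that $\cF|_{\P^1_U-Y}$ is trivial by three devices absent from your plan: Proposition~\ref{pr:Horrocks} (a Horrocks-type statement: a semisimple-group bundle over $\P^1_U$ trivial on all closed fibers is a pullback from $U$), evaluation at the section $U\times\infty$ (where $\cF\simeq\cG$ is trivial) to convert ``pullback from $U$'' into ``trivial'', and the non-abelian cohomology sequence for $1\to\bZ\to\bG\to\bG^{ad}\to1$ combined with Lemma~\ref{lm:Z-bundles} to pass from triviality of $\cF/\bZ$ to triviality of $\cF|_{\P^1_U-Y}$.

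Three further points need repair. First, your preliminary reduction ``dispose of the radical torus $\bT$ using $\Pic(\P^1_U-Y)=0$ after \'etale-local splitting'' is unjustified: for a non-split torus, $H^1(\P^1_U-Y,\bT)$ is not controlled by $\Pic(\P^1_U-Y)$ (for $\bT=\Res_{U'/U}\Gm{U'}$ it equals $\Pic(\P^1_{U'}-Y_{U'})$, which is not assumed to vanish). The paper handles the central directions only for bundles of multiplicative type that \emph{extend to all of} $\P^1_U$, where the rigidity classification of Lemma~\ref{lm:Z-bundles} applies, and it does so after the Horrocks step, not as a preliminary reduction to $\bG/\bT$. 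Second, to turn your sections $\tilde\alpha^i\in\Gr_{\scG^i}(Y)$ into a point of $\Gr_\bG(Y)$, i.e.\ into a modification of the $\bG$-bundle $\cG$ rather than of its semisimple quotient, you need the covering homomorphism $\prod_i\scG^i\to\bG^{ad}$ to lift to a homomorphism $\prod_i\scG^i\to\bG$ through the derived subgroup; this is Step~1 of the paper's proof and is missing from your plan, and without it your construction only modifies $\cG/\bT$ or $\cG/\bZ$. Third, Proposition~\ref{pr:PullbackTopTriv} cannot be used ``where needed'' inside this proof: pulling $\cG$ back along a finite cover $\P^1_U\to\P^1_U$ changes the bundle and the conclusion for the pullback does not return a conclusion for $\cG$; in Theorem~\ref{th:MainThm2} topological triviality on closed fibers is a hypothesis, and the pullback trick belongs to the proofs of Theorems~\ref{th:FP} and~\ref{th:FP2}, where it is applied \emph{before} this theorem is invoked.
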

\begin{remarks}
{\rm(}i{\rm)} We are not assuming that the components $Y^i$ are different for different $i$.\\
{\rm(}ii{\rm)} The condition that $\cO_{\P^1_U}(1)$  trivializes on $\P^1_U-Y$ is necessary. Indeed, if we take $\bG=\Gm{U}$, $\cG=\cO_{\P_U^1}(1)^\times$, $Y=\emptyset$, then $\cG$ is not trivial over $\P^1_U-Y$. (Note that $Y$ satisfies the other conditions of the theorem because $r=0$.)\\
{\rm(}iii{\rm)} Note that the $\bG^{\ad}_u$-bundle $(\cG|_{\P^1_u})/\bZ_u$ is Zariski locally trivial, because $\cG|_{\P^1_u}$ is trivial over $\P^1_u-Z_u$.\\
{\rm(}iv{\rm)} Assume that the residue fields of the closed points of $U$ are infinite. Then we may start with $Y,Z\subset\P^1_U$. Indeed, applying a projective transformation of $\P^1_U$ we can always achieve $Y,Z\subset\A^1_U$. The condition $Y\cap Z=\emptyset$ is also not necessary in this case, see Remark~2 after~\cite[Thm.~3]{FedorovPanin}.\\
{\rm(}v{\rm)} The proof of this theorem is much simpler in many cases: for example, if $U$ is local or normal. When $U$ is not normal, the problem is that a line bundle on $\P^1_U-Y$ need not be trivial, unless it can be extended to $\P^1_U$.
\end{remarks}

We need a proposition, which is a slight generalization of~\cite[Prop.~9.6]{PaninStavrovaVavilov}.
\begin{proposition}\label{pr:Horrocks}
Let, as above, $U$ be a connected affine semi-local scheme over a field. Let $\bH$ be a semisimple $U$-group scheme. Let $\cH$ be an $\bH$-bundle over $\P^1_U$ such that for every closed point $u\in U$ the restriction of $\cH$ to $\P^1_u$ is a trivial $\bH_u$-bundle. Then~$\cH$ is isomorphic to the pullback of an $\bH$-bundle over $U$.
\end{proposition}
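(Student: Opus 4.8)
The plan is to reduce the assertion to finding a single global section of an auxiliary torsor, and then to produce that section by a deformation-theoretic rigidity argument. Write $U=\spec R$, let $p:\P^1_U\to U$ be the projection, and set $\cE:=\cH|_{U\times\infty}$, an $\bH$-bundle over $U$. The group scheme $\bH':=\underline{\mathrm{Aut}}_\bH(\cE)$ is a semisimple $U$-group scheme (an inner form of $\bH$), and I would consider the torsor
\[
    \cF:=\underline{\mathrm{Isom}}_\bH(p^*\cE,\cH)
\]
under $p^*\bH'$ over $\P^1_U$. A global section of $\cF$ over $\P^1_U$ is exactly an isomorphism $p^*\cE\xrightarrow{\sim}\cH$, so it suffices to trivialize $\cF$. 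By construction $\cF$ carries a canonical point over $U\times\infty$ (the identity), and for every closed point $u$ the restriction $\cF|_{\P^1_u}$ is trivial, since $\cH|_{\P^1_u}$ and $(p^*\cE)|_{\P^1_u}$ are both trivial $\bH_u$-bundles, hence isomorphic. Consequently $\mathrm{ad}\,\cF:=\cF\times^{\bH'}\fh'$ restricts to $\fh'_u\otimes\cO_{\P^1_u}$ on each closed fibre, where $\fh'=\mathrm{Lie}(\bH')$.

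The engine of the proof is the following rigidity. Let $F$ be the functor sending a $U$-scheme $T$ to the set of sections of $\cF|_{\P^1_T}$ whose restriction to $T\times\infty$ equals the canonical one; I expect $F$ to be representable, the sections of an affine-over-$\P^1_U$ scheme being a Weil restriction along the proper flat map $\P^1_U\to U$ (one may embed $\bH'$ into some $\GL_{n,U}$ by the Remark above). The key point is that $F\to U$ is formally \'etale along the fibrewise sections: the tangent and obstruction spaces to deforming such a section over a fibre $\P^1_u$ are $H^0\bigl(\P^1_u,(\mathrm{ad}\,\cF)|_{\P^1_u}(-\infty)\bigr)$ and $H^1\bigl(\P^1_u,(\mathrm{ad}\,\cF)|_{\P^1_u}(-\infty)\bigr)$, and since $(\mathrm{ad}\,\cF)|_{\P^1_u}(-\infty)\simeq\fh'_u\otimes\cO_{\P^1_u}(-1)$ both vanish. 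Moreover each closed fibre $F\times_U k(u)$ is a single reduced $k(u)$-point: a section of a trivial bundle over $\P^1_u$ is a morphism $\P^1_u\to\bH'_u$, hence constant as $\bH'_u$ is affine, and its value at $\infty$ is prescribed.

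Finally I would globalize. Having shown that $F\to U$ is \'etale (formally \'etale, and, via the same $H^1$-vanishing together with cohomology and base change, flat and of finite type), and that near each closed point it is universally injective because the closed fibre is a single $k(u)$-rational point, the criterion that an \'etale universally injective morphism is an open immersion produces an open $W\subseteq F$ with $W\to U$ an open immersion whose image contains every closed point. Since $U$ is semi-local, an open subscheme meeting every closed point is all of $U$, so $W\to U$ is an isomorphism and its inverse is the desired section of $F$, hence of $\cF$. I expect the main obstacle to lie exactly in this globalization step, together with the representability of $F$: the vanishing $H^\bullet(\P^1,\cO(-1))=0$ makes the deformation problem simultaneously unobstructed and rigid on each fibre and over each complete local ring $\hat R_u$, but upgrading this fibrewise, formal rigidity to an honest section over the semi-local ring $R$ itself—rather than merely over the completions—is the delicate part, and is precisely where the semi-local hypothesis and the marked section at infinity are used.
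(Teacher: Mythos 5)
Your argument is correct (modulo the representability and deformation-theoretic facts you invoke, which are indeed standard), but it takes a genuinely different route from the paper's. The paper's proof is a reduction plus citation: since $\bH$ is semisimple, it embeds into $\GL_{n,U}$ by Thomason's theorem, and the rest follows the argument of Panin--Stavrova--Vavilov, Prop.~9.6 --- pass to the associated vector bundle, use a Horrocks-type cohomology-and-base-change argument over the semi-local base to see that it is extended from $U$, and then descend the reduction of structure group to $\bH$ using that $\GL_{n,U}/\bH$ is affine, so that the section of the associated affine fibration over $\P^1_U$ classifying the reduction is constant along the fibres. You instead work intrinsically: you form the torsor $\cF=\underline{\mathrm{Isom}}_\bH(p^*\cE,\cH)$ under the inner twist $\bH'$, represent the functor of sections of $\cF$ marked at infinity by a $U$-scheme $F$, and show that $F\to U$ is \'etale (rigidity coming from $H^0(\P^1,\cO(-1))=H^1(\P^1,\cO(-1))=0$) and universally injective (constancy of maps from $\P^1$ to the affine scheme $\bH'$), hence an open immersion whose image contains every closed point, hence an isomorphism by semi-locality. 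Your route buys generality and self-containedness: it never uses affineness of $\GL_n/\bH$, needs an embedding into $\GL_{n,U}$ only to make representability of $F$ routine, and works verbatim for any smooth affine (in particular any reductive, not merely semisimple) group scheme; the paper's route buys brevity, replacing the moduli-theoretic machinery (section schemes, deformation theory, the \'etale-plus-radicial criterion for open immersions) by classical facts about vector bundles on $\P^1$ over a semi-local ring.

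Two loose ends in your write-up, both closed by arguments you already have. First, universal injectivity must be checked over all points of $U$, not only the closed ones; but your constancy argument applies verbatim over any field-valued (indeed any) point: two marked sections over $\P^1_T$ differ by a $T$-morphism $\P^1_T\to\bH'_T$ into a scheme affine over $T$, which therefore factors through $T$ and, being the identity at $\infty$, is the identity --- so $F\to U$ is in fact a monomorphism of functors. Second, non-emptiness of the fibre of $F$ over a closed point $u$ needs the small extra step of translating an arbitrary isomorphism $(p^*\cE)|_{\P^1_u}\simeq\cH|_{\P^1_u}$ by the inverse of its value at $\infty$ so that it becomes the canonical one there.
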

\begin{proof}
Since $\bH$ is semisimple, there is an embedding $\bH\hookrightarrow\GL_{n,U}$ for some $n$ by~\cite[Cor.~3.2]{ThomasonResolution}. The rest of the proof is completely analogous to that of~\cite[Prop.~9.6]{PaninStavrovaVavilov}.
\end{proof}

\begin{proof}[Proof of Theorem~\ref{th:MainThm2}] \emph{Step 1}. Let $\scG^i$ be the simply-connected central cover of the group scheme $\bG^i$ (see~\cite[Exercise~6.5.2]{ConradReductive}). Then $\prod_{i=1}^r\scG^i$ is the simply-connected central cover of $\bG^{\ad}$. \emph{We claim that the covering homomorphism $\prod_{i=1}^r\scG^i\to\bG^{\ad}$ lifts to a homomorphism $\prod_{i=1}^r\scG^i\to\bG$.} Indeed, let $[\bG,\bG]$ be the derived subgroup scheme of $\bG$, then the morphism $[\bG,\bG]\to\bG^{\ad}$ is a central isogeny, so the simply-connected central cover of $[\bG,\bG]$ is also the simply-connected central cover of $\bG^{\ad}$. Hence, $\prod_{i=1}^r\scG^i\to\bG^{\ad}$ factors through $[\bG,\bG]$ and the statement follows. Thus we have a sequence of homomorphisms
\[
    \prod_{i=1}^r\scG^i\to\bG\to\bG^{\ad}=\prod_{i=1}^r\bG^i.
\]

\emph{Step 2}. Let $u\in U$ be a closed point and put $\cG_u:=\cG|_{\P^1_u}$. By assumption (cf.~Definition~\ref{def:TopTriv}), the $\bG^{\ad}_u$-bundle $\cG_u/\bZ_u$ lifts to a Zariski locally trivial $\prod_{i=1}^r\scG_u^i$-bundle $\tilde\cG_u$ over $\P^1_u$. This corresponds to a sequence $(\tilde\cG_u^1,\ldots,\tilde\cG_u^r)$, where $\tilde\cG_u^i$ is a $\scG^i_u$-bundle. Let $\cG_u^i$ be the pushforward of $\tilde\cG_u^i$ to $\bG_u^i$. \emph{We claim that $\tilde\cG_u^i$ is trivial over $\P^1_u-Y_u^i$ for all $i$.} Indeed, if $\scG_u^i$ is anisotropic, this follows immediately from~\cite[Thm.~3.10(a)]{GilleTorseurs}. If $\scG_u^i$ is isotropic, then $\bG_u^i$ is also isotropic (see~\cite[Thm.~2.20]{Borel-Tits2}) so $\Pic(\P^1_u-Y_u^i)=0$, and the statement again follows from~\cite[Thm.~3.10(a)]{GilleTorseurs}.

For $i=1,\ldots,r$ choose a trivialization $\tilde\tau_u^i$ of $\tilde\cG_u^i$ over $\P^1_u-Y_u^i$. These trivializations induce trivializations of each $\tilde\cG_u^i$ over $\P^1_u-Y_u$, which, in turn, give a trivialization of $\cG_u^i$ on $\P^1_u-Y_u$. Denote this trivialization by $\tau_u^i$.

\emph{Step 3}. Let $\cF_u^i$ be the trivial $\bG_u^i$-bundle over $\P^1_u$. Then $(\cF_u^i,\tau_u^i)$ is a modification of $\cG_u^i$ at $Y_u$. Choose a trivialization of $\cG$ over $\P^1_U-Z$. Since $Y\cap Z=\emptyset$, this gives a~trivialization of $\cG_u$ (and, in turn, of $\cG_u^i$) in a neighborhood of $Y_u\subset\P^1_u$.
The latter trivialization allows us to identify modifications with sections of the affine Grassmannian, so that $(\cF_u^i,\tau_u^i)$ corresponds to $\alpha_u^i\in\Gr_{\bG^i}(Y_u)$.

\begin{lemma}
    {\rm(}i{\rm)} $\alpha_u^i|_{Y_u-Y_u^i}=\Id_{\Gr}$.\\
    {\rm(}ii{\rm)} $\alpha_u^i|_{Y_u^i}$ can be lifted to $\tilde\alpha_u^i\in\Gr_{\scG^i}(Y_u^i)$.
\end{lemma}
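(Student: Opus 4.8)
The plan is to compute $\alpha_u^i$ through the functoriality of the modification--affine-Grassmannian dictionary of Proposition~\ref{pr:ModAffGr}. The fixed trivialization of $\cG$ near $Y$ (available since $Y\cap Z=\emptyset$) pushes forward to a trivialization of $\cG_u$ pushed to $\bG^i$ near $Y_u$, and under $\bG\to\bG^i$ the modification $(\cF_u,\tau_u)$ pushes forward to $((\cF_u)_{\bG^i},(\tau_u)_{\bG^i})$, whose class is $\alpha_u^i$. The crucial simplification is that the composite of~\eqref{eq:homo} with the projection $\bG^{ad}\to\bG^i$ factors as $\prod_{j}\scG^j\xrightarrow{\mathrm{pr}_i}\scG^i\to\bG^i$ through the $i$-th simply-connected central cover. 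Consequently the radical $\bT$ dies in $\bG^i$, so $\cG_u$ pushed to $\bG^i$ is canonically the pushforward $\cG_u^i$ of $\tilde\cG_u^i$, and $(\tau_u)_{\bG^i}$ over $\P^1_u-Y_u$ is precisely the image of $\tilde\tau_u^i$ under $\scG^i\to\bG^i$. Thus all of $\alpha_u^i$ is governed by the single sc-trivialization $\tilde\tau_u^i$, and I would read off (i) and (ii) from its two defining features: it is defined on all of $\P^1_u-Y_u^i$, and it trivializes a bundle induced from a simply-connected group.

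For part~(i) I would work on the formal disc about $Y_u-Y_u^i$. There $\tilde\tau_u^i$ has no pole, being a trivialization of $\tilde\cG_u^i$ on the open set $\P^1_u-Y_u^i$ which contains $Y_u-Y_u^i$ and a neighbourhood; hence its pushforward $(\tau_u)_{\bG^i}$ is an honest full-disc trivialization of $\cG_u^i$ near $Y_u-Y_u^i$. Comparing it with the fixed trivialization (pushed to $\bG^i$), which is likewise regular on this disc, the comparison element lies in $\bG^i(D_{Y_u-Y_u^i})$; therefore its class is the unit section, giving $\alpha_u^i|_{Y_u-Y_u^i}=\Id_{\Gr}$.

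For part~(ii) I would first produce a lift on the disc about $Y_u^i$. Since $\tilde\cG_u^i$ is Zariski locally trivial, it is trivial at each closed point of $Y_u^i$, hence trivial on the finite reduced scheme $Y_u^i$, so by Remark~\ref{rm:twistGr}(ii) it admits a trivialization $\tilde\rho$ on the formal neighbourhood $D_{Y_u^i}$. Using $\tilde\rho$ as reference, the modification $(\,\text{trivial }\scG^i,\ \tilde\tau_u^i)$ of $\tilde\cG_u^i$ defines a point $\tilde\beta\in\Gr_{\scG^i}(Y_u^i)$. Its image in $\Gr_{\bG^i}(Y_u^i)$ is, by the computation above, built from $\tilde\tau_u^i$ but relative to the pushforward of $\tilde\rho$ rather than to the fixed trivialization $\sigma_u^i$; since any two full-disc trivializations of $\cG_u^i$ differ by an element $h\in\bG^i(D_{Y_u^i})=L^+\bG^i(Y_u^i)$, this image equals $h\cdot(\alpha_u^i|_{Y_u^i})$ for the $L^+\bG^i$-action recorded in Remark~\ref{rm:twistGr}(i).

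The hard point is precisely this discrepancy $h$: the bundle $\tilde\beta$ a priori lifts the \emph{translate} $h\cdot(\alpha_u^i|_{Y_u^i})$, not $\alpha_u^i|_{Y_u^i}$ itself, and $h$ need not lift to $\scG^i(D_{Y_u^i})$. This is where Proposition~\ref{pr:liftsTOscGr} is indispensable: applied over each residue field of $Y_u^i$ to the semisimple group $\bG^i$ and its cover $\scG^i$, it shows that the image of $\Gr_{\scG^i}$ in $\Gr_{\bG^i}$ is stable under the entire $L^+\bG^i$-action. As $h\cdot(\alpha_u^i|_{Y_u^i})$ lies in this image (it is the image of $\tilde\beta$), applying $h^{-1}\in L^+\bG^i$ shows that $\alpha_u^i|_{Y_u^i}$ lies in it as well, i.e.\ lifts to some $\tilde\alpha_u^i\in\Gr_{\scG^i}(Y_u^i)$, completing~(ii).
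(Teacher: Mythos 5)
Your proof is correct and takes essentially the same route as the paper's: both arguments compare $\alpha_u^i$ with a class built from $\tilde\tau_u^i$ relative to an auxiliary trivialization coming from the simply-connected level, observe that the two differ by an action of $L^+\bG^i$ (Remark~\ref{rm:twistGr}(i)), and then deduce part~(i) from the regularity of the gluing on the disc (equivalently, $L^+$-invariance of $\Id_{\Gr}$) and part~(ii) from Proposition~\ref{pr:liftsTOscGr}. The only cosmetic difference is that the paper fixes a single auxiliary Zariski trivialization of $\tilde\cG_u^i$ near all of $Y_u$, producing one class $\beta_u^i$ that handles both parts at once, whereas you use a formal trivialization near $Y_u^i$ (via Remark~\ref{rm:twistGr}(ii)) and treat the two pieces of $Y_u$ separately.
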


\begin{proof}
Consider any trivialization $\tilde\sigma_u^i$ of $\tilde\cG_u^i$ in a Zariski neighborhood of $Y_u$. This induces a trivialization $\sigma_u^i$ of $\cG_u^i$ in the same neighborhood. These trivializations allow us to identify modifications with sections of affine Grassmannians. In particular, denoting by $\tilde\cF_u^i$ the trivial $\scG^i_u$-bundle over $\P^1_u$, we get a modification $(\tilde\cF_u^i,\tilde\tau_u^i)$ of $\tilde\cG_u^i$ and thus a section $\tilde\beta^i_u\in\Gr_{\scG^i}(Y_u^i)$. We extend $\tilde\beta^i_u$ to an element of $\Gr_{\scG^i}(Y_u)$ (which we also denote by $\tilde\beta^i_u$) by setting $\tilde\beta_u^i|_{Y_u-Y^i_u}=\Id_{\Gr}$. Equivalently, this extension corresponds to the modification $(\tilde\cF_u^i,\tilde\tau_u^i|_{\P^1_u-Y_u})$.

Let $\beta_u^i$ be the image of $\tilde\beta_u^i$ under the projection $\Gr_{\scG^i}(Y_u)\to\Gr_{\bG^i}(Y_u)$. It follows from the construction that $\alpha_u^i$ and $\beta_u^i$ correspond to the same modification of the same $\bG_u^i$-bundle but with respect to different trivialization of this bundle near $Y_u$. According to Lemma~\ref{lm:ChangeOfTriv}, $\alpha_u^i$ differs from $\beta_u^i$ by an action of an element of $L^+\bG^i(Y_u)$. Since $\Id_{\Gr}\in\Gr_{\bG^i}(Y_u-Y_u^i)$ is $L^+\bG^i(Y_u-Y_u^i)$-invariant, we obtain statement~(i). Statement~(ii) follows from Proposition~\ref{pr:liftsTOscGr}, applied to each point of $Y_u^i$, and the fact that $\beta_u^i$ lifts to $\tilde\beta_u^i$.
\end{proof}

\emph{Step 4}. Let $U^{\,\mathrm{cl}}$ be the set of closed points of $U$. Let $\tilde\alpha_u^i$ be as in the above lemma. Recall that $\bG^i$ is the Weil restriction of a simple $U'$-group scheme (call it $\bG'$) via a finite \'etale morphism $U'\to U$ with connected $U'$. It is easy to see that $\scG^i$ is the Weil restriction of the simply-connected cover of $\bG'$ via the same morphism. Also, $(\scG^i)_{Y^i}$ contains a proper parabolic subgroup scheme because $(\bG^i)_{Y^i}$ does (cf.~\cite[Exercise~5.5.8]{ConradReductive}). Thus, by Lemma~\ref{lm:ExtendSect} the collection  $(\tilde\alpha_u^i|u\in U^{\,\mathrm{cl}})$ lifts to a point $\tilde\alpha^i\in\Gr_{\scG^i}(Y^i)$. We extend this to a point of $\Gr_{\scG^i}(Y)$ by setting $\tilde\alpha^i|_{Y-Y^i}=\Id_{\Gr}$. The collection $(\tilde\alpha^i|i=1,\ldots,r)$ gives rise to a section $\alpha\in\Gr_\bG(Y)$. Since we have trivialized $\cG$ in a neighborhood of $Y$, this gives a modification $(\cF,\tau)$ of $\cG$ at $Y$. By construction the $\bG^{\ad}_u$-bundle $(\cF|_{\P^1_u})/\bZ_u$ is trivial for $u\in U^{\,\mathrm{cl}}$. Now, by Proposition~\ref{pr:Horrocks}, the $\bG^{\ad}$-bundle $\cF/\bZ$ is isomorphic to the pullback of a $\bG^{\ad}$-bundle under the projection $\P^1_U\to U$. On the other hand, since $\cF$ is a modification of $\cG$ at $Y$, the $\bG^{\ad}$-bundle $(\cF/\bZ)|_{U\times\infty}\simeq(\cG/\bZ)|_{U\times\infty}$ is trivial. It follows that $\cF/\bZ$ is trivial. Now, it follows from the exact sequence for non-abelian cohomology groups, that there is a $\bZ$-bundle $\cZ$ over $\P^1_U$ such that $\cF$ is isomorphic to the pushforward of $\cZ$.

\emph{Step 5}. Note that the center of a reductive group scheme is a group scheme of multiplicative type. Recall that the relative line bundle $\cO_{\P^1_U}(1)$ trivializes on $\P_U^1-Y$. The following lemma is somewhat similar to~\cite[Lemma~2.4]{ColliotTheleneSansuc}.
\begin{lemma}\label{lm:Z-bundles}
Let $U$ and $Y$ be as before; let $\bZ$ be a group scheme of multiplicative type over $U$. Let $\cZ$ be a $\bZ$-bundle over $\P^1_U$. Then $\cZ|_{\P^1_U-Y}$ is isomorphic to the pullback of a $\bZ$-bundle over $U$.
\end{lemma}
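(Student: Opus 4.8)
The plan is to rephrase the statement cohomologically and then reduce it to a computation on all of $\P^1_U$. Writing $V:=\P^1_U-Y$, $p\colon V\to U$ for the projection, $j\colon V\hookrightarrow\P^1_U$ for the open immersion and $\pi\colon\P^1_U\to U$, the assertion is that the class of $\cZ|_V$ in $H^1_{\mathrm{fppf}}(V,\bZ)$ lies in $\mathrm{im}(p^*)$. Since $Y\subset\A^1_U$, the infinity section $\sigma\colon U\to\P^1_U$ factors through $V$ and splits $p$; moreover $p=\pi\circ j$ and $\sigma^*\pi^*=\mathrm{id}$. If $c\in H^1(\P^1_U,\bZ)$ is the class of $\cZ$, the natural candidate is $p^*\sigma^*c$, and $j^*c=p^*\sigma^*c+j^*c_0$ with $c_0:=c-\pi^*\sigma^*c$ satisfying $\sigma^*c_0=0$. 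Hence the whole lemma reduces to the claim that $j^*$ annihilates the reduced subgroup $\widetilde H^1(\P^1_U,\bZ):=\ker\bigl(\sigma^*\colon H^1(\P^1_U,\bZ)\to H^1(U,\bZ)\bigr)$.

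To control $\widetilde H^1(\P^1_U,\bZ)$ I would pass to a connected finite Galois cover $U'\to U$, with group $\Gamma$, over which $\bZ$ becomes diagonalizable, $\bZ_{U'}\simeq\Gm{U'}^r\times\prod_b\mu_{n_b}$; here $U'$ is again semi-local, so $\Pic(U')=0$ and $\Pic(\P^1_{U'})=\Z$. First I compute the reduced cohomology in this split case: $\Pic(\P^1_{U'})=\Z$ gives $\widetilde H^1(\P^1_{U'},\Gm{U'})=\Z\cdot[\cO(1)]$, while the Kummer sequence together with $\cO^*(\P^1_{U'})=\cO^*(U')$ and $\Pic(\P^1_{U'})[n]=0$ gives $\widetilde H^1(\P^1_{U'},\mu_n)=0$. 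Thus $\lambda'\mapsto[\lambda'_*\cO(1)^\times]$ identifies $\widetilde H^1(\P^1_{U'},\bZ_{U'})$ with the cocharacter group $X_*(\bZ_{U'})$, $\Gamma$-equivariantly (cf.\ the classification $E\simeq\lambda_*\cO(1)^\times$ recalled above). Second, the Hochschild--Serre sequence for $\P^1_{U'}\to\P^1_U$ reads $0\to H^1(\Gamma,\bZ(\P^1_{U'}))\to H^1(\P^1_U,\bZ)\xrightarrow{\mathrm{res}} H^1(\P^1_{U'},\bZ_{U'})^\Gamma$; as $\P^1_{U'}$ is proper over $U'$ and $\bZ_{U'}$ is affine, $\bZ(\P^1_{U'})=\bZ(U')$, and $\sigma$ induces the identity on this group. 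Therefore $\sigma^*$ restricted to $\ker(\mathrm{res})=H^1(\Gamma,\bZ(U'))$ is the injective edge map $H^1(\Gamma,\bZ(U'))\hookrightarrow H^1(U,\bZ)$, so $\ker(\mathrm{res})\cap\widetilde H^1(\P^1_U,\bZ)=0$ and $\mathrm{res}$ is \emph{injective} on $\widetilde H^1(\P^1_U,\bZ)$.

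Combining the two computations, for $c\in\widetilde H^1(\P^1_U,\bZ)$ the restriction $\mathrm{res}(c)$ is a $\Gamma$-invariant element of $\widetilde H^1(\P^1_{U'},\bZ_{U'})=X_*(\bZ_{U'})$, i.e.\ a $\Gamma$-invariant cocharacter, which by descent comes from a cocharacter $\lambda\colon\Gm{U}\to\bZ$ defined over $U$. The class $[\lambda_*\cO(1)^\times]$ is itself reduced and has the same restriction to $U'$, so the injectivity just established forces $c=[\lambda_*\cO(1)^\times]$: every reduced $\bZ$-bundle on $\P^1_U$ is the pushforward along a $U$-cocharacter of a power of $\cO(1)^\times$. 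Finally I restrict to $V$: since $\lambda$ is defined over $U$, $j^*[\lambda_*\cO(1)^\times]=[\lambda_*(\cO(1)|_V)^\times]$, and $\cO(1)|_V$ is trivial because $\Pic(V)=\Pic(\P^1_U-Y)=0$ by hypothesis; hence $j^*c=0$, proving the reduction and the lemma.

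The step I expect to be delicate is the descent in the second paragraph. On $V$ itself one cannot hope that reduced classes vanish: for anisotropic $\bZ$ the kernel $H^1(\Gamma,\bZ(V'))$ of restriction to the splitting cover is genuinely nonzero, so a direct argument on $V$ fails. The essential trick is to run the argument on $\P^1_U$ first, where properness yields $\bZ(\P^1_{U'})=\bZ(U')$ and the section $\sigma$ splits off exactly this troublesome kernel, giving injectivity of $\mathrm{res}$; the hypothesis $\Pic(\P^1_U-Y)=0$ is then invoked only at the very end and only over $U$, so no base-change stability of this vanishing is required. A routine point to handle carefully is that everything must be read in fppf cohomology, as $\mu_n$ is non-smooth when the characteristic divides $n$; the Hochschild--Serre sequence for the finite étale Galois cover, the identification $H^1_{\mathrm{fppf}}(-,\Gm{})=\Pic$, and the classification of $\bZ$-torsors by $H^1_{\mathrm{fppf}}(-,\bZ)$ are all available in that setting.
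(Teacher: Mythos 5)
You have a genuine gap at the start of your second paragraph: you pass to a connected finite \'etale Galois cover $U'\to U$ over which $\bZ$ becomes diagonalizable. Such a cover exists if and only if $\bZ$ is \emph{isotrivial}, and under the hypotheses of the lemma this can fail: $U$ is only a connected semi-local scheme over a field --- it is not assumed normal or geometrically unibranch --- and (SGA3, Exp.~X, \S5) groups of multiplicative type over such bases need not be split by any finite \'etale cover. Concretely, let $U$ be the spectrum of the local ring at the node of an irreducible nodal curve; the infinite chain of copies of its normalization, glued end to end, is a connected \'etale cover of $U$ with deck group $\Z$, and descending the constant sheaf $\Z^2$ along an infinite-order matrix $A\in\GL_2(\Z)$ produces a rank-two torus over $U$ with infinite monodromy, which therefore is split by no finite \'etale cover. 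Such $U$ are not excluded by the lemma (a nodal local ring is seminormal, so one can even arrange $\Pic(\P^1_U-Y)=0$ with $Y=U\times0$), and they genuinely occur in the paper's applications: in the proof of Theorem~\ref{th:FP2} the lemma is used, via Theorem~\ref{th:MainThm2}, over localizations of $W\times_kU$ with $W$ an arbitrary affine $k$-scheme, which need not be normal. The rest of your argument --- the reduction to killing $\ker\bigl(\sigma^*\bigr)$, the identification of the reduced part of $H^1_{\mathrm{fppf}}(\P^1_{U'},\bZ)$ with $X_*(\bZ_{U'})$, the Hochschild--Serre injectivity, and the final appeal to $\Pic(\P^1_U-Y)=0$ --- is sound whenever the cover exists, so your proof is complete for normal (e.g.\ regular) $U$, but not in the stated generality.

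The paper produces the cocharacter without any global finite cover: it shows there is a \emph{unique} cocharacter $\lambda:\Gm{U}\to\bZ$ such that $\cZ$ and $\lambda_*\cO_{\P^1_U}(1)^\times$ become isomorphic fppf-locally over $U$. Existence and uniqueness are verified fppf-locally on $U$, where $\bZ$ is diagonalizable (this uses only quasi-isotriviality, valid for every multiplicative type group, reducing to the cases $\Gm{U}$ and $\mu_{n,U}$), and the uniqueness is exactly what makes the locally constructed cocharacters glue over $U$; it plays the role of your Galois descent of a $\Gamma$-invariant cocharacter. Then $\cZ\otimes(\lambda_*\cO(1)^\times)^{-1}$ is fppf-locally trivial over $U$, hence isomorphic to $p^*\cZ''$ for a $\bZ$-bundle $\cZ''$ over $U$, and $\Pic(\P^1_U-Y)=0$ trivializes $\lambda_*\cO(1)^\times$ over $\P^1_U-Y$, exactly as in your last step. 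To repair your write-up, either add a geometrically-unibranch hypothesis on $U$, or replace the finite Galois cover by this local-uniqueness-and-gluing argument.
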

\begin{proof}
Since $\bZ$ is not smooth in general, we will work in the fppf topology over~$U$. We claim that there is a unique co-character $\lambda\colon\Gm{U}\to\bZ$ such that $\cZ':=\lambda_*\cO_{\P^1_U}(1)^\times$ and $\cZ$ are isomorphic locally in the fppf topology over $U$. Indeed, the statement is local over $U$, so we may assume that $\bZ$ is split. Then the question reduces to the cases $\bZ=\Gm{U}$ and $\bZ=\mu_{n,U}$, where $\mu_{n,U}$ is the group scheme of $n$-th roots of unity. The first case is a statement about line bundles; we leave it to the reader. The second case reduces to the statement that a $\mu_{n,U}$-bundle over $\P^1_U$ is trivial fppf locally over the base, which follows easily from the exact sequence $1\to\mu_{n,U}\to\Gm{U}\to\Gm{U}\to1$; the claim is proved.

We see that $\cZ\simeq\cZ'\otimes p^*\cZ''$, where $p\colon\P^1_U\to U$ is the projection, $\cZ''$ is a $\bZ$-bundle over $U$ (note that $\bZ$ is a commutative group scheme so the tensor product of $\bZ$-bundles makes sense). It remains to notice that $\cZ'=\lambda_*\cO_{\P^1_U}(1)^\times$ is trivial on $\P_U^1-Y$ because $\cO_{\P^1_U-Y}(1)$ is trivial. Lemma~\ref{lm:Z-bundles} is proved.
\end{proof}

We see that $\cF|_{\P^1_U-Y}$ is isomorphic to the pullback of a $\bG$-bundle over $U$. Since~$\cF$ and $\cG$ are isomorphic over $U\times\infty$ and $\cG$ is trivial over $U\times\infty$, we see that $\cF|_{\P^1_U-Y}$ is trivial. Finally, $\cG$ and $\cF$ are isomorphic over $\P^1_U-Y$, and Theorem~\ref{th:MainThm2} is proved.
\end{proof}

\subsection{Proof of  Theorem~\ref{th:FP}}\label{sect:ProofFP} We use the notation from the formulation of the theorem. We may assume that $U$ is connected. Applying an affine transformation to $\A^1_U$, we may assume that $\Delta$ is the horizontal section $\Delta(U)=U\times1$. We can extend the $\bG$-bundle $\cE$ to a $\bG$-bundle $\tilde\cE$ over $\P^1_U$ by gluing it with the trivial $\bG$-bundle over $\P^1_U-Z$. Let $\phi\colon\bG^{\sc}\to\bG^{\ad}$ be the simply-connected central cover (see~\cite[Exercise~6.5.2]{ConradReductive}); let $d$ be the degree of $\phi$. Consider the morphism $\P^1_\Z\to\P^1_\Z\colon z\mapsto z^d$; let $\psi\colon\P^1_U\to\P^1_U$ be the base change of this morphism. Consider the $\bG$-bundle $\psi^*\tilde\cE$ over $\P^1_U$. For a closed point $u\in U$ write $\tilde\cE_u:=\tilde\cE|_{\P_u^1}$. Then by~\cite[Thm.~3.8(a)]{GilleTorseurs} the $\bG^{\ad}_u$-bundle $\tilde\cE_u/\bZ_u$ is Zariski locally trivial. By Proposition~\ref{pr:PullbackTopTriv} the $\bG^{\ad}_u$-bundle $\psi^*\tilde\cE_u/\bZ_u$ is topologically trivial. Since the morphism $\psi$ has a section over $U\times1$, it is enough to show that $\psi^*\tilde\cE|_{U\times1}$ is trivial.

\emph{Case 1. $U$ is a scheme over an infinite field $k$.} By~\cite[Prop.~4.1]{FedorovPanin} for $i=1,\ldots,r$, we can find a closed subscheme $Y^i\subset\P^1_U$ finite and \'etale over $U$ such that $(\bG^i)_{Y^i}$ is isotopic and for every closed point $u\in U$ such that $\bG_u^i$ is isotropic we have a $k(u)$-rational point on the fiber $Y^i_u$ (which implies $\Pic(\P^1_u-Y^i_u)=0$). Since $k$ is infinite and $U$ is semi-local, we can shift the subschemes $Y^i$ so that they do not intersect each other, $\psi^{-1}(Z)$, and $U\times 1$. Again, since $k$ is infinite, we have $a\in k$ such that $U\times a$ does not intersect $\psi^{-1}(Z)$. It remains to take $Y=\sqcup_{i=1}^rY^i\sqcup(U\times a)$ and apply Theorem~\ref{th:MainThm2} to $\psi^*\tilde\cE$.

\emph{Case 2. The residue fields of points of $U$ are finite over $k$.} By~\cite[Lemma~2.1]{PaninFiniteFieldsIzvestiya} applied to the identity morphism $U\to U$, we can find field extensions $k'\supset k$ and $k''\supset k$ of coprime degrees and a closed $U$-embedding
\[
    (U\times_k\spec k')\bigsqcup(U\times_k\spec k'')\hookrightarrow\A^1_U
\]
such that the image $Y_1$ of this embedding does not intersect $\psi^{-1}(Z)$. Note that the relative line bundle $\cO(1)$ trivializes on $\P^1_U-Y_1$. By~\cite[Prop.~3.2]{PaninFiniteFieldsIzvestiya} we can find an \'etale and finite over $U$ subscheme $Y_2\subset\A^1_U$ such that $\bG^{\ad}_{Y_2}$ is quasisplit, for all points $u\in U^{\,\mathrm{cl}}$ we have $\Pic(\P^1_u-(Y_2)_u)=0$, and
\[
    Y_2\cap(\psi^{-1}(Z)\cup Y_1)=\emptyset.
\]
(Note that the proposition is only formulated for simple simply-connected group schemes but the proof goes through for all semisimple group schemes. The proposition also requires that $Z$ is \'etale over $U$ but this is also not used in the proof.) It is easy to see that the factors of a quasisplit semisimple group scheme are quasisplit and thus isotropic by~\cite[Exp.~XXVI, Cor.~6.14]{SGA3-3}. Then $Y=Y_1\cup Y_2$ satisfies the conditions of Theorem~\ref{th:MainThm2} (one takes $Y_i=Y_2$ for all~$i$). It remains to apply Theorem~\ref{th:MainThm2} to $Y$ and $\psi^*\tilde\cE$. \qed

\subsection{Proof of  Theorem~\ref{th:FP2}}\label{sect:ProofFP2}
We use the notation from the formulation of the theorem. As in the proof of Theorem~\ref{th:FP}, we extend the $\bG$-bundle $\cE$ to a $\bG$-bundle $\tilde\cE$ over $\P^1_U$ and assume that $\Delta(U)=U\times1$. Let $\psi$ and $\tilde\cE_u$ be as in the proof of Theorem~\ref{th:FP}, then  $\psi^*\tilde\cE_u/\bZ_u$ is topologically trivial for every closed point $u\in U$. It is enough to show that~$\psi^*\tilde\cE|_{\P_U^1-(U\times0)}$ is trivial.  By assumption, we can embed $\bG$ into $\GL_{n,U}$. By~\cite[Korollar~3.5.2]{MoserGrSerre} we may assume that~$U$ is local (note that $\P^1_U-(U\times 0)\simeq\A^1_U$).

In the same way as in the proof of Theorem~\ref{th:FP} we find a closed subscheme $Y\subset\P^1_U$ finite and \'etale over $U$ such that $\psi^*\tilde\cE$ is trivial over $\P^1_U-Y$. Note that such $Y$ may be chosen so that it does not intersect any given closed subscheme of $\A^1_U$ as long as this subscheme is finite over $U$. In particular, we may assume that $Y\cap(U\times0)=\emptyset$. Since $\bG$ is strongly locally isotropic and $U$ is local, each $\bG^i$ is locally isotropic. Thus, we can apply Theorem~\ref{th:MainThm2} taking $Y$ for $Z$ and $U\times0$ for $Y$. We see that $\psi^*\tilde\cE$ is trivial over $\P^1_U-(U\times0)$, which completes the proof of the theorem. \qed

\section{Proofs of Theorem~\ref{th:main} and of Theorem~\ref{th:GrSerre}}
In this Section we derive Theorem~\ref{th:GrSerre} and Theorem~\ref{th:main} from Theorem~\ref{th:FP} and Theorem~\ref{th:FP2} respectively. The proofs are based on~\cite[Thm.~1.5]{PaninNiceTriples}. Note that these derivations are similar to those given in~\cite{FedorovPanin,PaninFiniteFieldsIzvestiya,PaninStavrovaVavilov}; we present them here for the sake of completeness.

\subsection{Proof of Theorem~\ref{th:GrSerre}}\label{sect:ProofOfGrSerre}
\emph{Step 1.} We may assume that $U$ is the semi-local scheme of finitely many closed points $x_1,\ldots,x_n$ on a smooth irreducible $k$-variety~$X$, where~$k$ is a field. Indeed, let $U=\spec R$ and let $k$ be the prime field of $R$ (or any other perfect field contained in $R$). Then, by Popescu's Theorem~\cite{Popescu,SwanOnPopescu,SpivakovskyPopescu}, we can write $U=\lim\limits_{\longleftarrow}U_\alpha$, where $U_\alpha$ are affine schemes smooth and of finite type over $k$. Modifying the system $(U_\alpha)$, we may assume that $U_\alpha$ are integral schemes. A~standard argument shows that there is an index $\alpha$, a reductive group scheme $\bG_\alpha$ over $U_\alpha$ such that $\bG_\alpha|_U=\bG$, and a $\bG_\alpha$-bundle $\cE_\alpha$ over $U_\alpha$ trivial over the generic point of $U_\alpha$ and such that the pullback of $\cE_\alpha$ to $U$ is isomorphic to $\cE$. Let $y_1,\ldots,y_n\in U_\alpha$ be the images of all closed points of $U$. For $i=1,\ldots,n$ choose a closed point $x_i\in U_\alpha$ in the Zariski closure of $y_i$. Let $R'$ be the semi-local ring of $x_1,\ldots,x_n$ on $X:=U_\alpha$. Let $\bG'$ be the restriction of $\bG_\alpha$ to $U':=\spec R'$. The morphism $U\to U_\alpha$ factors through $U'$. Thus it is enough to prove the theorem for $U'$, $\bG'$, and $\cE':=\cE_\alpha\times\times_{U_\alpha}U'$.

\emph{Step 2.} Replacing $X$ by a Zariski neighbourhood of $\{x_1,\ldots,x_n\}$, we may assume that there are a group scheme $\bG_X$ over $X$ such that $\bG_X|_U=\bG$, a $\bG_X$-bundle $\cE'$ over $X$ such that $\cE'|_U=\cE$, and a non-zero function $f\in H^0(X,\cO_X)$ such that the restriction of $\cE'$ to $X_f$ is a trivial bundle.

\emph{Step 3.} We keep the notation from Step~2. Multiplying $f$ by an appropriate function, we may assume that $f$ vanishes at each $x_i$.
Our goal is to construct a $\bG$-bundle $\cG$ over $\A^1_U$ by \'etale descent such that $\cG|_{U\times0}\simeq\cE$. Then we can apply Theorem~\ref{th:FP} to conclude that $\cE$ is trivial. The construction of this $\cE$ is standard and is achieved by using a certain diagram. Precisely, by~\cite[Thm.~1.5]{PaninNiceTriples} there is a monic polynomial $h\in H^0(U,\cO_U)[t]$, a commutative diagram with an irreducible affine $U$-smooth $Y$:
\begin{equation}\label{eq:CD}
\begin{CD}
    (\A^1_U)_h @<<< Y_{\tau^*(h)} @>p_X|_{Y_\tau^*(h)}>>X_f\\
    @V inc VV @V inc VV @V inc VV \\
    \A^1_U @<\tau<< Y @>p_X>>X,
\end{CD}
\end{equation}
and a morphism $\delta\colon U\to Y$ satisfying the following conditions:\\
(i) the left square is an elementary distinguished square in the category of affine $U$-smooth schemes in the sense of~\cite[Sect.~3.1, Def.~1.3]{MorelVoevodsky}; this means that the vertical maps are open embeddings, the horizontal maps are \'etale, and $\tau$ induces an isomorphism
\[
    \tau^{-1}(\{h=0\})_{red}\to\{h=0\}_{red};
\]
(ii) $p_X\circ\delta=can\colon U\to X$, where $can$ is the canonical morphism;\\
(iii) $\tau\circ\delta=i_0\colon U\to\A^1_U$ is the zero section of the projection $pr_U\colon \A^1_U\to U$.\\
(iv) for $p_U:=pr_U\circ\tau$ there is a $Y$-group scheme isomorphism $\Phi\colon p_U^*(\bG)\to p_X^*(\bG_X)$
with $\delta^*(\Phi)=id_{\bG}$.

\emph{Step 4.} We use part~(iv) of Step~3 to view $p_X^*\cE'$ as a $\bG$-bundle. We use the left square from part (i) of Step~3 to glue the trivial $\bG$-bundle over $(\A^1_U)_h$ with $p_X^*\cE'$ to get a $\bG$-bundle $\cG$ over $\A^1_U$. We have
\begin{equation}\label{eq:isoSch}
    \cE=can^*\cE'=\delta^* p_X^*\cE'=\delta^*\tau^*\cG=i_0^*\cG
\end{equation}
so it remains to show that $i_0^*\cG$ is trivial. But $\{h=0\}$ is a closed subscheme of $\A^1_U$ and it is finite over $U$ because $h$ is monic. The residues of all closed points of $U$ are finite extensions of $k$, so they are finite if $k$ is finite. Thus we can apply Theorem~\ref{th:FP} and conclude that $i_0^*\cG$ is trivial. \qed

\begin{remark}
A priori,~\eqref{eq:isoSch} is an isomorphism of $U$-schemes. This is enough for our purposes because a principal bundle is trivial if and only if it has a section, so that triviality does not depend on the group scheme action. On the other hand, using the equation $\delta^*(\Phi)=id_{\bG}$, one can show that~\eqref{eq:isoSch} is compatible with the action of the group scheme, see~\cite[Sect.~6]{PaninNiceTriples}.
\end{remark}

\subsection{Proof of Theorem~\ref{th:main}}\label{sect:ProofOfMain}
\emph{Step 1.} Replacing $k$ by its prime subfield $k'$ and $W$ by $W\times_{k'}k$, we may assume that $k$ is perfect. Next, we may assume that $W$ is of finite type over~$k$. Indeed, write $W=\lim\limits_{\longleftarrow}W_\alpha$, where $W_\alpha$ are $k$-schemes of finite type. Since $\cE$ is affine and finitely presented over $W\times_k U$, there is an index $\alpha$ and a $\bG$-bundle $\cE_\alpha$ over $W_\alpha\times_k U$ such that $\cE$ is isomorphic to the pullback of $\cE_\alpha$ to $W\times_k U$. Next, there is an index $\beta>\alpha$ such that the pullback of $\cE_\alpha$ to $W_\beta\times_k U$ (call it $\cE_\beta$) is trivial over $W_\beta\times_k\Omega$. We see that it is enough to prove the theorem with $W$ and $\cE$ replaced by $W_\beta$ and $\cE_\beta$.

\emph{Step 2.} Similarly to Step~1 of the proof of Theorem~\ref{th:GrSerre}, we may assume that~$U$ is the semi-local scheme of finitely many closed points $x_1,\ldots,x_n$ on a smooth irreducible $k$-variety $X$. In more details, by Popescu's Theorem we can write $U=\lim\limits_{\longleftarrow}U_\alpha$, where $U_\alpha$ are affine schemes smooth and of finite type over $k$. We may assume that $U_\alpha$ are integral schemes. Then we find an index $\alpha$, a reductive strongly locally isotropic group scheme $\bG_\alpha$ over $U_\alpha$ such that $\bG_\alpha|_U=\bG$, and a $\bG_\alpha$-bundle $\cE_\alpha$ over $W\times_kU_\alpha$ trivial over $W\times_k\Omega_\alpha$, where $\Omega_\alpha$ is the generic point of $U_\alpha$ and such that the pullback of $\cE_\alpha$ to $W\times_kU$ is isomorphic to $\cE$. Then it is enough to prove the theorem with $U$ replaced by an appropriate semi-local ring of finitely many closed points of $U_\alpha$.

\emph{Step 3.} Set $U':=W\times_kU$, $X':=W\times_kX$. Similarly to Step~2 of the proof of Theorem~\ref{th:GrSerre}, we may assume that there is a group scheme $\bG_X$ over $X$ such that $\bG_X|_U=\bG$, a $\bG_X$-bundle $\cE'$ over $X'$ such that $\cE'|_{U'}=\cE$, and a non-zero function $f\in H^0(X,\cO_X)$ such that the restriction of $\cE'$ to $X'_f$ is a trivial bundle.

\emph{Step 4.} Similarly to Step~3 of the proof of Theorem~\ref{th:GrSerre}, we find a monic polynomial $h\in H^0(U,\cO_U)[t]$, a commutative diagram~\eqref{eq:CD} with an irreducible affine $U$-smooth~$Y$, and a morphism $\delta\colon U\to Y$ satisfying the same conditions.

\emph{Step 5.} Set $Y':=W\times_kY$. The diagram~\eqref{eq:CD} is a diagram over $k$. Thus we can multiply this diagram by $W$, getting a monic polynomial $h'\in H^0(U',\cO_{U'})[t]$ and a commutative diagram
\[
\begin{CD}
    (\A^1_{U'})_{h'} @<<< Y'_{(\tau')^*(h')} @>p_{X'}|_{Y'_{(\tau')^*(h')}}>>X'_f\\
    @V inc VV @V inc VV @V inc VV \\
    \A^1_{U'} @<\tau'<< Y' @>p_{X'}>>X'.
\end{CD}
\]
We also get a morphism $\delta'\colon U'\to Y'$. These data satisfy the following conditions:\\
(i) the left hand side square is an elementary distinguished square in the category of
affine $U'$-smooth schemes in the sense of~\cite[Sect.~3.1, Def.~1.3]{MorelVoevodsky};\\
(ii) $p_{X'}\circ\delta'=can\colon U'\to X'$, where $can$ is the canonical morphism;\\
(iii) $\tau'\circ\delta'=i'_0\colon U'\to\A^1_{U'}$ is the zero section of the projection $pr_{U'}\colon\A^1_{U'}\to U'$.\\

\emph{Step 6.} We use part~(iv) of Step~4 of the proof of Theorem~\ref{th:GrSerre} to view $p_{X'}^*\cE'$ as a $\bG$-bundle. We use the left square from part (i) of Step~5 to glue the trivial $\bG$-bundle over $(\A^1_{U'})_{h'}$ with $p_{X'}^*\cE'$ to get a $\bG$-bundle $\cG$ over $\A^1_{U'}$. We have
\begin{equation*}
    \cE=can^*\cE'=(\delta')^* p_{X'}^*\cE'=(\delta')^*(\tau')^*\cG=(i'_0)^*\cG,
\end{equation*}
so it remains to show that $(i'_0)^*\cG$ is trivial. But $\bG$ can be embedded into $\GL_{n,U}$ for some $n$ because $U$ is regular and, in particular, normal (see~\cite[Cor.~3.2]{ThomasonResolution}). Thus $\bG_{U'}$ can be embedded into $\GL_{n,U'}$. Next, $\{h'=0\}$ is a closed subscheme of $\A^1_{U'}$ and it is finite over $U'$ because $h'$ is monic. Thus we can apply Theorem~\ref{th:FP2} and conclude that $(i'_0)^*\cG$ is trivial. \qed

\bibliographystyle{../../alphanum}
\bibliography{../../RF}

\end{document}